\documentclass[12pt,reqno,a4paper]{amsart}

\usepackage{amsmath,amsfonts,amsthm,amssymb,amsxtra,enumerate,mathtools,mathabx}
\usepackage[normalem]{ulem}
\usepackage[utf8]{inputenc}
\usepackage[T1]{fontenc}
\usepackage{lmodern}
\usepackage{bbm}
\usepackage{mathrsfs}
\usepackage{enumerate}
\usepackage{comment}
\usepackage{placeins}
\usepackage{pgfplots}
\pgfplotsset{compat=newest}
\usepgfplotslibrary{fillbetween}
\usepackage{csquotes}
\usepackage{tikz}
\usetikzlibrary{arrows.meta, positioning}

\usepackage{float}

\usepackage{blindtext}

\makeatletter

\newtheorem{theorem}{Theorem}
\newtheorem{proposition}[theorem]{Proposition}
\newtheorem{lemma}[theorem]{Lemma}

\theoremstyle{definition}

\theoremstyle{remark}

\newcommand{\C}{\mathbb{C}}

\newcommand{\dd}{\, \mathrm{d}}

\renewcommand{\epsilon}{\varepsilon}

\renewcommand{\phi}{\varphi}
\newcommand{\R}{\mathbb{R}}

\newcommand{\Hs}{\mathcal{H}}

\DeclareMathOperator{\re}{Re}

\DeclareMathOperator{\Tr}{Tr}
\DeclareMathOperator{\tr}{Tr}

\let\oldtocsection=\tocsection
\let\oldtocsubsection=\tocsubsection
\let\oldtocsubsubsection=\tocsubsubsection
\renewcommand{\tocsection}[2]{\hspace{0em}\oldtocsection{#1}{#2}}
\renewcommand{\tocsubsection}[2]{\hspace{0.75cm}\oldtocsubsection{#1}{#2}}
\renewcommand{\tocsubsubsection}[2]{\hspace{2em}\oldtocsubsubsection{#1}{#2}}

\date{14th September 2026}
\numberwithin{equation}{section}
\usepackage[hidelinks]{hyperref}

\begin{document}
	
	\begin{abstract}
		We prove that the best constant in the one-dimensional Lieb--Thirring inequality with exponent one is $4/(3\sqrt{3}\pi)$, confirming the Lieb--Thirring conjecture in this case. Moreover, we extend the inequality to operator-valued potentials and obtain the bound $L_{1,d}\leq(2/\sqrt3)L_{1,d}^{\mathrm{cl}}$ in every dimension.
	\end{abstract}
	
	\title[A sharp one-dimensional Lieb--Thirring inequality]{The sharp one-dimensional Lieb--Thirring inequality for the sum of eigenvalues}
	
	\author[Larry Read]{Larry Read}
	\address[Larry Read]{Laboratoire de mathématiques d’Orsay, CNRS, Université Paris-Saclay, 91405 Orsay, France}
	\email{larry.read@universite-paris-saclay.fr}
	\author[Marvin R. Schulz]{Marvin R. Schulz}
	\address[Marvin R. Schulz]{Department of Mathematical Sciences, University of Copenhagen, Universitetsparken 5, 2100 Copenhagen, Denmark}
	\email{masc@math.ku.dk}
	\subjclass[2010]{Primary: 35P15; Secondary: 81Q10}
	
	\maketitle
	
	For a one-dimensional Schr\"odinger operator $-d^2/dx^2-V$, the lowest negative eigenvalue $\lambda_1$ satisfies the sharp bound \cite{keller1961lower,lieb1976inequalities}
	\begin{align*}
		|\lambda_1|\leq\frac4{3\sqrt3\pi}\int_{\R}V(x)_+^{3/2}\dd x\,,
	\end{align*}
	where $V_+=\max\{V,0\}$. Lieb and Thirring \cite{lieb1976inequalities} conjectured that the same estimate holds with $|\lambda_1|$ replaced by the sum of the absolute values of all negative eigenvalues. In this paper we prove this conjecture.
	
	\begin{theorem}\label{thm:main}
		Let $V\in L^{3/2}(\R)$ be real-valued. Then the negative eigenvalues $\{\lambda_k\}_{k\geq1}$ of $-d^2/dx^2-V$ satisfy
		\begin{align*}
			\sum_{k\geq1}|\lambda_k|\leq\frac4{3\sqrt3\pi}\int_{\R}V(x)_+^{3/2}\dd x\,.
		\end{align*}
		The constant is sharp.
	\end{theorem}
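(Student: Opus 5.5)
Sharpness is immediate. Take the potential $V(x)=2\operatorname{sech}^2(x)$. The operator $-d^2/dx^2-V$ has exactly one negative eigenvalue $\lambda_1=-1$, with eigenfunction $\operatorname{sech}(x)$. One computes
\begin{align*}
\int_{\R}V^{3/2}\dd x=2^{3/2}\int_{\R}\operatorname{sech}^3(x)\dd x=2^{3/2}\cdot\frac{\pi}{2},
\end{align*}
and $\frac{4}{3\sqrt3\pi}\cdot\sqrt2\,\pi$ is not equal to $1$. So I would instead use the one-bound-state Keller optimiser from \cite{keller1961lower,lieb1976inequalities}, which attains equality in the single-eigenvalue bound. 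Since $\sum_k|\lambda_k|\ge|\lambda_1|$, this already shows the constant cannot be lowered. The real content is the upper bound.

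For the upper bound I would use a trace-formula/commutation (Darboux) approach by induction on the number of eigenvalues.

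\textbf{Reduction.} By density and continuity of the eigenvalues in $L^{3/2}$, it suffices to treat $V\ge0$ smooth and compactly supported with finitely many eigenvalues $\lambda_1<\dots<\lambda_N<0$.

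\textbf{Removing the ground state.} Let $\psi_1>0$ be the ground state and set $u=-\psi_1'/\psi_1$. Then $-\partial_x^2-V=A^*A+\lambda_1$ with $A=\partial_x+u$ and $V=u^2-u'-\lambda_1$ (up to sign conventions). The partner operator $AA^*+\lambda_1=-\partial_x^2-\tilde V$, with $\tilde V=V+2u'$, has exactly the eigenvalues $\lambda_2,\dots,\lambda_N$.

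\textbf{Key inequality.} The whole proof then rests on the monotonicity-type estimate
\begin{align*}
|\lambda_1|+\frac{4}{3\sqrt3\pi}\int\tilde V_+^{3/2}\dd x\le \frac{4}{3\sqrt3\pi}\int V_+^{3/2}\dd x .
\end{align*}
Iterating it $N$ times gives the theorem. Note that $\int\tilde V=\int V-2\cdot 0$ up to boundary terms, so the $L^1$ norm is preserved; the gain must come from the convexity of $t\mapsto t_+^{3/2}$.

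\textbf{Attacking the key inequality.} I would write $V=u^2-u'+\mu^2$ with $\mu^2=|\lambda_1|$, where $u\to\pm\mu$ at $\mp\infty$ and $u'=V-u^2+\ldots$ satisfies a Riccati equation. Then $\tilde V=u^2+u'+\mu^2$ (with matching sign conventions). The aim is a pointwise-integrated inequality of the form
\begin{align*}
(u^2+\mu^2+u')_+^{3/2}-(u^2+\mu^2-u')_+^{3/2}\le -c\,\frac{d}{dx}F(u)+\ldots
\end{align*}
so that integrating leaves exactly $\mu^3\cdot\frac{3\sqrt3\pi}{4}$ from the boundary values $F(\mu)-F(-\mu)$. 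The natural route is to change variables along $u$ (monotone pieces), viewing $u'$ as a free parameter $s$. For each fixed $u$ one then minimises, over $s$, a function of the form
\begin{align*}
\frac{(a+s)_+^{3/2}-(a-s)_+^{3/2}}{s}\quad\text{against}\quad\frac{du}{dx}.
\end{align*}
This reduces the problem to a one-variable optimisation producing $\int_{-\mu}^{\mu}(\ldots)\,du$, which should evaluate to the Keller constant; the equality case should be the sech-type soliton.

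\textbf{Main obstacle.} This last step is where I expect the difficulty. The sign changes of $u'$ mean $u$ is not monotone, so the change of variables picks up back-and-forth contributions. Showing these contributions have the right sign needs a careful convexity argument, and possibly a different choice of $F$ than the naive one. If the pure Darboux step fails, I would fall back to an operator-valued lifting argument in the style of Laptev–Weidl. This fits the abstract's mention of operator-valued potentials: one would prove the sharp $1$-D bound for matrix-valued $V$ through the same commutation method, and then obtain the multidimensional $2/\sqrt3$ factor as a by-product.
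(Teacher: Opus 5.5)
Sharpness is fine once you use the Keller optimiser $V=\frac34\operatorname{sech}^2x$ (with $\lambda_1=-\frac14$). The upper bound, however, rests entirely on your key inequality $|\lambda_1|+\frac{4}{3\sqrt3\pi}\int\tilde V_+^{3/2}\le\frac{4}{3\sqrt3\pi}\int V_+^{3/2}$. That inequality is false, so this induction cannot be closed whatever $F$ you choose.

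Take two distant copies of the optimiser, $V(x)=\frac34\operatorname{sech}^2(x-R)+\frac34\operatorname{sech}^2(x+R)$ with $R\to\infty$. Then $\frac{4}{3\sqrt3\pi}\int V_+^{3/2}\to\frac12$ and $|\lambda_1|\to\frac14$. The partner potential is $\tilde V=V+2(\log\psi_1)''$.
\begin{itemize}
\item Near $x=\pm R$ the ground state is $\psi_1\approx c\operatorname{sech}^{1/2}(x\mp R)$, so $\tilde V\approx-\frac14\operatorname{sech}^2(x\mp R)\le0$.
\item Between the wells $V\approx0$, and the even ground state is $\psi_1\propto\cosh(\mu x)$ with $\mu=|\lambda_1|^{1/2}\to\frac12$. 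This gives the one-soliton $\tilde V\approx2\mu^2\operatorname{sech}^2(\mu x)$.
\end{itemize}
Hence $\frac{4}{3\sqrt3\pi}\int\tilde V_+^{3/2}\to\frac{2}{3\sqrt6}\approx0.272$, and the left side of your key inequality tends to $\frac14+\frac{2}{3\sqrt6}>\frac12$. This is exactly the non-monotone-$u$ configuration you were worried about, and it is the inequality itself that fails, not just its proof.

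Separately, the Darboux step does not preserve $\int V$: in fact $\int\tilde V=\int V-4\mu$. Your fallback does not rescue the argument either. Matrix-valued commutation would contain the same scalar step as a special case. Laptev--Weidl lifting only transfers an already-proved one-dimensional bound to higher dimensions.

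The paper never peels off eigenvalues. It passes by duality to the kinetic inequality $\sum_j\int(u_j')^2\ge\frac{\pi^2}{4}\int\rho^3$ for orthonormal families, and extends the Benguria--Loss cumulative-mass argument.
\begin{itemize}
\item With $A(x)=\int_{-\infty}^xUU^{\mathsf T}$, one has $0\le A\le I$.
\item It uses the flux $F_q=\frac14\sum_{i,j}p_ip_jq(\frac{\lambda_i+\lambda_j}2)$, where $\lambda_i,e_i$ are the eigenpairs of $A(x)$ and $p_i=(e_i\cdot U)^2$.
\item Writing $F_q$ as a polynomial in $A$ and $U$ lets it be differentiated without differentiating eigenvectors.
\item Completing a square pointwise reduces everything to the two-variable condition $q(s)q(t)+\frac{q(s)-q(t)}{s-t}\le-\kappa$ on $[0,1]^2$.
\end{itemize}
The choice $q(s)=-\alpha\tan(\alpha(s-\frac12))$ satisfies this with $\kappa=\alpha^2$, and letting $\alpha\uparrow\pi$ gives the sharp constant for all eigenvalues simultaneously.
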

	
	Equality holds for $V(x)=\frac34\operatorname{sech}^2x$, which produces a single negative eigenvalue, $\lambda_1=-1/4$.
	
	This question belongs to the general problem of determining the optimal constants in the celebrated Lieb--Thirring inequalities
	\begin{align*}
		\sum_{k\geq1}|\lambda_k|^\gamma\leq L_{\gamma,d}\int_{\R^d}V(x)_+^{\gamma+d/2}\dd x\,,
	\end{align*}
	where $\{\lambda_k\}_{k\geq1}$ are the negative eigenvalues of $-\Delta-V$ in $L^2(\R^d)$, counted with multiplicity, and $L_{\gamma,d}$ denotes the optimal constant. For real-valued $V\in L^{\gamma+d/2}(\R^d)$, Lieb and Thirring \cite{lieb1976inequalities} proved these bounds for $\gamma>1/2$ if $d=1$ and $\gamma>0$ if $d\geq2$. The endpoints $\gamma=1/2$, $d=1$, and $\gamma=0$, $d\geq3$, were established by Weidl \cite{weidl1996lieb} and Cwikel, Lieb, and Rozenblum \cite{cwikel1977weak,lieb1976bounds,rozenbljum1972distribution,rozenblum1976distribution}, respectively.
	
	The lowest eigenvalue and the semiclassical limit give two competing lower bounds on $L_{\gamma,d}$. The first is the sharp constant $L_{\gamma,d}^{(1)}$ for the lowest eigenvalue alone; the opening inequality gives $L_{1,1}^{(1)}=4/(3\sqrt3\pi)$. The second is the semiclassical constant
	\begin{align*}
		L_{\gamma,d}^{\mathrm{cl}}=\frac{\Gamma(\gamma+1)}{(4\pi)^{d/2}\Gamma(\gamma+1+d/2)}\,,
	\end{align*}
	which follows from Weyl asymptotics in the strong-coupling limit. The original conjecture of Lieb and Thirring proposes that the optimal constant is determined by one of these two regimes: a single bound state, or the semiclassical limit in which the number of bound states tends to infinity. In terms of the constants, it asserts that
	\begin{align*}
		L_{\gamma,d}=\max\big\{L_{\gamma,d}^{(1)},L_{\gamma,d}^{\mathrm{cl}}\big\}\,.
	\end{align*}
	
	Until now, the conjecture had been proved only for $\gamma\geq3/2$ in every dimension and for $\gamma=1/2$ in one dimension. The one-dimensional case $\gamma=3/2$ predates Lieb and Thirring's work and follows from the KdV trace identities \cite{buslaev1960traces,zakharov1971korteweg,gardner1974korteweg}; Aizenman--Lieb monotonicity \cite{aizenman1978semi} gives all $\gamma>3/2$. Laptev and Weidl \cite{laptev2000sharp} established the result for $d>1$, while Hundertmark, Lieb, and Thomas \cite{hundertmark1998sharp} proved $L_{1/2,1}=L_{1/2,1}^{(1)}=1/2$. In one dimension, the single-eigenvalue constant is the larger candidate for $1/2\leq\gamma<3/2$, and the two constants coincide at $\gamma=3/2$. Numerical investigations support the one-dimensional conjecture \cite{levitt2014best} and suggest periodic optimizers in two dimensions \cite{frank2021periodic}.
	
	By contrast, the conjecture fails in several higher-dimensional ranges, including $d=3$, $1/2<\gamma<1$ \cite{helffer1990riesz,frank2021nonlinear}. We refer to the surveys \cite{frank2020lieb,schimmer2022state,frank2023orthonormal,nam2023direct} and the monograph \cite{frank2022schrodinger} for further background.
	
	The exponent $\gamma=1$ is of particular importance: the dual kinetic inequality is the key estimate in Lieb and Thirring's proof of the stability of matter \cite{lieb1975kinetic}. In one dimension, Eden and Foias \cite{eden1991simple} obtained $L_{1,1}\leq2/(3\sqrt3)$, and Dolbeault, Laptev, and Loss \cite{dolbeault2008improved} extended their argument to matrix-valued potentials. Building on Rumin's direct kinetic method \cite{rumin2011balanced}, Frank, Hundertmark, Jex, and Nam \cite{frank2021revisited} proved $L_{1,d}\leq1.456L_{1,d}^{\mathrm{cl}}$ in every dimension. Refining their method, Corso and Ried \cite{corso2025variational} obtained
	\begin{align*}
		L_{1,d}\leq1.44655L_{1,d}^{\mathrm{cl}}\,,\qquad d\geq1\,,
	\end{align*}
	which gives the best previously known upper bound for $L_{1,1}$. In every dimension, Nam \cite{nam2018semiclassical} obtained a kinetic bound with constant arbitrarily close to the semiclassical value and a gradient error term. Seiringer and Solovej \cite{seiringer2023simple} subsequently strengthened this estimate and gave a simpler proof.
	
	Benguria and Loss \cite{benguria2004ovals} established a remarkable connection between the two-eigenvalue bound and the ovals conjecture, a spectral isoperimetric problem. This conjecture has been studied in \cite{burchard2005isoperimetric,linde2006lower,denzler2015existence,bernstein2015projective,linde2025improved}. Our proof extends their elementary one-eigenvalue argument, based on cumulative mass, to orthonormal families of arbitrary finite size, retaining the sharp constant.
	
	The argument also extends to vector-valued orthonormal families. By duality \cite{dolbeault2008improved}, this gives the one-dimensional Lieb--Thirring inequality for operator-valued potentials, with the same sharp constant. The dimension lifting argument of Laptev and Weidl \cite{laptev2000sharp} then gives the following bound.
	
	\begin{theorem}\label{thm:higher-dimensional}
		For every $d\geq1$, we have
		\begin{align*}
			L_{1,d}\leq\frac2{\sqrt3}L_{1,d}^{\mathrm{cl}}\,.
		\end{align*}
	\end{theorem}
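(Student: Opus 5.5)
The plan is to combine the operator-valued version of Theorem~\ref{thm:main} with the Laptev--Weidl dimension lifting. The first step is to take as given the operator-valued form of the one-dimensional sharp inequality, which is announced just before the statement. Let $\mathcal G$ be a separable Hilbert space and $V:\R\to\mathcal B(\mathcal G)$ a self-adjoint operator-valued potential with $\Tr_{\mathcal G} V_+^{3/2}\in L^1(\R)$. Then
\begin{align*}
\Tr_{L^2(\R;\mathcal G)}\Big(-\frac{d^2}{dx^2}\otimes 1-V\Big)_-\le \frac{4}{3\sqrt3\pi}\int_\R \Tr_{\mathcal G}V(x)_+^{3/2}\dd x\,.
\end{align*}
This is obtained by running the argument for Theorem~\ref{thm:main} for $\mathcal G$-valued orthonormal families and passing to the potential form by the duality of Dolbeault--Laptev--Loss. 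So the constant for $\gamma=1$, $d=1$ with operator-valued potentials is $L^{(1)}_{1,1}=\frac4{3\sqrt3\pi}$.

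Next we compare this constant with the semiclassical one. We have $L^{\mathrm{cl}}_{1,1}=\Gamma(2)/(\sqrt{4\pi}\,\Gamma(5/2))=1/(2\sqrt\pi\cdot\frac34\sqrt\pi)=\frac{2}{3\pi}$. Hence the operator-valued one-dimensional constant equals $\frac{4}{3\sqrt3\pi}=\frac{2}{\sqrt3}L^{\mathrm{cl}}_{1,1}$, which proves the case $d=1$.

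For $d\ge2$ we write $-\Delta-V=-\partial_{x_1}^2\otimes 1-W(x_1)$, where $W(x_1)=-\Delta_{x'}-V(x_1,\cdot)$ acts in $\mathcal G=L^2(\R^{d-1})$. Since
\begin{align*}
-\partial_1^2-W\ge -\partial_1^2-W_-\,,
\end{align*}
where $W_-$ is the negative part of $W$ (so $-W\le W_-$), the variational principle gives
\begin{align*}
\Tr(-\Delta-V)_-\le \frac{2}{\sqrt3}L^{\mathrm{cl}}_{1,1}\int_\R \Tr_{\mathcal G}\bigl(-\Delta_{x'}-V(x_1,\cdot)\bigr)_-^{3/2}\dd x_1\,.
\end{align*}
The inner trace involves the exponent $\gamma=3/2$ in dimension $d-1$. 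By Laptev--Weidl, the sharp constant there is semiclassical, $L_{3/2,d-1}=L^{\mathrm{cl}}_{3/2,d-1}$. In fact this holds for operator-valued potentials as well, although we only need scalar potentials at this stage. Applying it gives the bound $\frac{2}{\sqrt3}L^{\mathrm{cl}}_{1,1}L^{\mathrm{cl}}_{3/2,d-1}\int V_+^{1+d/2}$.

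It remains to check the product identity $L^{\mathrm{cl}}_{1,1}L^{\mathrm{cl}}_{3/2,d-1}=L^{\mathrm{cl}}_{1,d}$. From the Gamma formula,
\begin{align*}
L^{\mathrm{cl}}_{1,1}L^{\mathrm{cl}}_{3/2,d-1}=\frac{\Gamma(2)}{(4\pi)^{1/2}\Gamma(5/2)}\cdot\frac{\Gamma(5/2)}{(4\pi)^{(d-1)/2}\Gamma(2+d/2)}=L^{\mathrm{cl}}_{1,d}\,,
\end{align*}
because the factors $\Gamma(5/2)$ cancel. This yields $L_{1,d}\le\frac2{\sqrt3}L^{\mathrm{cl}}_{1,d}$.

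The main obstacle is the operator-valued one-dimensional inequality itself, namely extending the cumulative-mass argument to $\mathcal G$-valued orthonormal families with the same sharp constant. The lifting step and the constant bookkeeping are routine. A secondary technical point is justifying the lifting for general $V\in L^{1+d/2}$. For this we would first take $V$ bounded and compactly supported, so that all traces are finite, and then pass to general $V$ by density, using the continuity of both sides.
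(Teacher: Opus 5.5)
Your reduction for $d\ge2$ is exactly the paper's proof of Theorem~\ref{thm:higher-dimensional}, and it is correct. That covers the variational principle with $\mathcal V(x_1)=(-\Delta_{d-1}-V(x_1,\cdot))_-$, the operator-valued bound \eqref{eqn:operator-spectral-main}, the sharp $\gamma=3/2$ constant of Laptev--Weidl, and the cancellation of $\Gamma(5/2)$. The gap is the input you take as given. The operator-valued one-dimensional inequality does not follow by ``running the argument for Theorem~\ref{thm:main}'' on $\Hs$-valued orthonormal families. It is the real content of Section~\ref{sec:vector-proof} (Proposition~\ref{prop:vector-kinetic}), and the scalar argument fails at a specific step. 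You also cannot reduce to Theorem~\ref{thm:main} by expanding in a basis of $\Hs$, because the component functions are not orthonormal in $L^2(\R)$. Nor can you pass to the scalar density $\rho=\tr R$: the right side must be $\int\tr R^3$, not $\int\rho^3$.

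Here is where the scalar argument breaks. After completing the square in \eqref{eqn:tauFrho}, the certificate \eqref{eqn:certificate} is applied term by term. This is legitimate only because the weights $p_ip_jp_k$ are non-negative, that is, because $Q=UU^{\mathsf T}$ has rank one. For $\Hs$-valued families, such as $\Hs=L^2(\R^{d-1})$ in the lifting, the matrix $Q_{ij}=\langle\psi_i,\psi_j\rangle_{\Hs}$ has rank larger than one. The weights then become $\re(Q_{ij}Q_{jk}Q_{ki})$, which can be negative. For example, with $N=3$, $Q_{ii}=1$ and $Q_{ij}=-\epsilon$ for $i\neq j$ with $0<\epsilon\leq 1/2$, the matrix is positive semidefinite and $Q_{12}Q_{23}Q_{31}=-\epsilon^3$.

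So the scalar certificate gives no sign. You need the genuinely matrix-valued condition $\mathcal G_{q,\kappa}(P,\lambda)\geq0$ of Lemma~\ref{lem:vector-chain}. For $q=b_\alpha$ the paper proves it in three steps:
\begin{enumerate}
\item It rescales to $C_{ij}=P_{ij}/\cos(\theta_i+\theta_j)$, which stays positive semidefinite because $C-TCT=DPD$ and $\|T\|<1$.
\item This reduces the condition to $\sum_{i,k}\re((C^2)_{ik}C_{ki})\,g(\theta_i,\theta_k)\geq0$.
\item It writes $g$ as a non-negative combination of $(\sigma_i-\sigma_k)^4$ (Lemma~\ref{lem:op-fourth-powers}), each of which contributes $2\tr(CK^2)\geq0$.
\end{enumerate}
That representation in turn rests on $\sqrt g$ being a squared Euclidean distance. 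This comes from positive definiteness of the form $\mathcal B$, proved by a weighted Schur test. Without this argument or a substitute, your proposal only shows that the sharp operator-valued one-dimensional bound implies Theorem~\ref{thm:higher-dimensional}, which is the Laptev--Weidl lifting itself.
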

	
	The factor $2/\sqrt3\approx 1.1547$ improves the previously best bound of $1.44655$ by Corso and Ried \cite{corso2025variational}.
	We prove Theorem~\ref{thm:higher-dimensional} in
	Section~\ref{sec:vector-proof}.
	
	\section{Proof of Theorem~\ref{thm:main}}
	
	In this section, we work with real-valued functions. By the standard duality between the spectral and kinetic Lieb--Thirring inequalities \cite[Section~2]{schimmer2022state}, it suffices to prove the following bound for every integer $N\geq1$ and every family $u_1,\ldots,u_N\in H^1(\R)$ orthonormal in $L^2(\R)$:
	\begin{align}\label{eqn:kinetic}
		\sum_{j=1}^N\int_{\R}(u_j')^2\dd x \geq\frac{\pi^2}{4}\int_{\R}\left(\sum_{j=1}^N u_j^2\right)^3\dd x\,.
	\end{align}
	
	Our starting point is the sharp scalar inequality for real-valued functions due to Sz.-Nagy \cite{sznagy1941integral},
	\begin{align}\label{eqn:scalar1part}
		\int_{\R}(u')^2\dd x\geq\frac{\pi^2}{4}\int_{\R}u^6\dd x\,,\qquad u\in H^1(\R)\,,\qquad \|u\|_2=1\,.
	\end{align}
	Benguria and Loss \cite[Section~2]{benguria2004ovals} obtain this inequality by using cumulative mass as a new variable, reducing it to the Dirichlet Poincar\'e inequality on $(0,1)$. We rewrite their argument using an auxiliary function $q$, in a form that will allow us to extend it to orthonormal families.
	
	Let $q\in C^1([0,1])$ and $\kappa>0$ satisfy
	\begin{align*}
		q'(s)+q(s)^2\leq-\kappa\,,\qquad 0\leq s\leq1\,.
	\end{align*}
	For $u\in H^1(\R)$ with $\|u\|_2=1$, let
	\begin{align*}
		a(x)=\int_{-\infty}^x u(y)^2\dd y\,.
	\end{align*}
	Then $0\leq a\leq1$ and $a'=u^2$. By the product rule and our assumption on $q$, we have almost everywhere
	\begin{align*}
		(u')^2-\left(\frac14u^4q(a)\right)'=\left(u'-\frac12u^3q(a)\right)^2-\frac14u^6\left(q'(a)+q(a)^2\right)\geq \frac{\kappa}4 u^6\,.
	\end{align*}
	Since $u$ tends to zero at both ends of the line and $q$ is bounded on $[0,1]$, the derivative of $\frac14u^4 q(a)$ integrates to zero. Integrating the inequality therefore gives
	\begin{align*}
		\int_{\R}(u')^2\dd x\geq\frac\kappa4\int_{\R}u^6\dd x\,.
	\end{align*}
	
	To recover the sharp constant, we take $q=b_\alpha$, where
	\begin{align*}
		b_\alpha(s)=-\alpha\tan\bigl(\alpha(s-1/2)\bigr)\,,\qquad 0<\alpha<\pi\,.
	\end{align*}
	Then $b_\alpha$ is smooth on $[0,1]$ and satisfies $b_\alpha'+b_\alpha^2=-\alpha^2$, so letting $\alpha\uparrow\pi$ gives \eqref{eqn:scalar1part}. The limiting weight $\pi\cot(\pi s)$ is the logarithmic derivative of $\sin(\pi s)$, the first Dirichlet eigenfunction on $(0,1)$, which arises from the cumulative-mass change of variables in the argument of Benguria and Loss.

	We now extend the argument to an orthonormal family. Write $U=(u_1,\ldots,u_N)^{\mathsf T}$ and denote its density and kinetic energy density by $\rho=|U|^2$ and $\tau=|U'|^2$, respectively. Define the cumulative mass matrix
	\begin{align*}
		A(x)=\int_{-\infty}^x U(y)U(y)^{\mathsf T}\dd y\,.
	\end{align*}
	For every unit vector $c$, the quantity $c\cdot A(x)c$ is the $L^2$ mass of the normalised function $c\cdot U$ on $(-\infty,x)$. Consequently, $0\leq A\leq I$ and $A'=UU^{\mathsf T}$.
	
	For the orthonormal family we again introduce an auxiliary function $q\in C^1([0,1])$ and a constant $\kappa\in\R$, whose relation will be specified below. We seek a locally absolutely continuous function $F_q$ that replaces $\frac14u^4q(a)$ and satisfies
	\begin{align*}
		\tau-F_q'\geq\frac\kappa4\rho^3\quad\text{a.e. on }\R\,,\qquad \lim_{x\to\pm\infty}F_q(x)=0\,.
	\end{align*}
	These properties give the kinetic bound with constant $\kappa/4$ by integration.
	
	We will use
	\begin{align}\label{eqn:boundary-term}
		F_q=\frac14\sum_{i,j}p_ip_jq\left(\frac{\lambda_i+\lambda_j}{2}\right)\,,
	\end{align}
	where, at each $x$, the vectors $e_i$ form an orthonormal eigenbasis of $A(x)$ with eigenvalues $\lambda_i$, and $p_i=(e_i\cdot U(x))^2$. This recovers $\frac14u^4q(a)$ when $N=1$. For polynomial $q$, we will rewrite $F_q$ as a polynomial in $A$ and $U$, establishing its local absolute continuity without differentiating eigenvectors.
	
	\begin{lemma}\label{lem:polynomial}
		Suppose a function $q\in C^1([0,1])$ and a constant $\kappa>0$ satisfy
		\begin{align}\label{eqn:certificate}
			q(s)q(t)+\frac{q(s)-q(t)}{s-t}\leq-\kappa\,,\qquad 0\leq s,t\leq 1\,.
		\end{align}
		Then
		\begin{align}\label{eqn:polynomial-bound}
			\sum_{j=1}^N\int_{\R}(u_j')^2\dd x \geq\frac\kappa4\int_{\R}\left(\sum_{j=1}^N u_j^2\right)^3\dd x\,.
		\end{align}
	\end{lemma}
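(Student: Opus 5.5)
The plan is to construct the function $F_q$ of \eqref{eqn:boundary-term}, show that $\tau-F_q'\geq\frac\kappa4\rho^3$ almost everywhere, and integrate over $\R$. I would first treat polynomial $q$ and then pass to general $q\in C^1([0,1])$ by approximation.

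\emph{Step 1 (polynomial $q$, regularity).} For $q(s)=s^n$, expand $\bigl(\frac{\lambda_i+\lambda_j}{2}\bigr)^n$ binomially. This gives
\begin{align*}
F_q=\frac14\sum_{k=0}^n 2^{-n}\binom nk\,(U^{\mathsf T}A^kU)(U^{\mathsf T}A^{n-k}U),
\end{align*}
because $\sum_i p_i\lambda_i^k=U^{\mathsf T}A^kU$. By linearity the same holds for every polynomial $q$, so $F_q$ is a polynomial in the entries of $A$ and $U$. Since $A$ is $C^1$ with $A'=UU^{\mathsf T}$ and $U\in H^1$, the function $F_q$ is locally absolutely continuous. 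Its derivative follows from the product rule, using
\begin{align*}
(A^k)'=\sum_{m=0}^{k-1}A^mUU^{\mathsf T}A^{k-1-m}.
\end{align*}
Moreover $|F_q|\leq\frac14\|q\|_\infty\rho^2$, and $\rho\to0$ at $\pm\infty$ since $U\in H^1$. Hence $F_q\to0$ at both ends.

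\emph{Step 2 (pointwise inequality).} At a fixed $x$, I would work in the eigenbasis of $A(x)$, setting $w_i=e_i\cdot U$ and $v_i=e_i\cdot U'$, so that $p_i=w_i^2$ and $\tau=\sum_iv_i^2$. The derivative then splits into two parts.
\begin{itemize}
\item A part linear in $v$: $\sum_i v_iw_i\,\bigl(\sum_j w_j^2\,q((\lambda_i+\lambda_j)/2)\bigr)$, coming from differentiating the two $U$'s.
\item A part of degree six in $w$, coming from $A'=UU^{\mathsf T}$. The sums $\sum_m\lambda_i^m\lambda_j^{k-1-m}$ assemble into divided differences $(\lambda_i^k-\lambda_j^k)/(\lambda_i-\lambda_j)$, so for general polynomial $q$ this part is expressed through divided differences of $q$. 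These are the terms $\frac{q(s)-q(t)}{s-t}$ appearing in \eqref{eqn:certificate}, which I expect to be the source of that term.
\end{itemize}
I would then complete the square in $v$, mimicking the scalar identity $(u')^2-(\frac14u^4q(a))'=(u'-\frac12u^3q(a))^2-\frac14u^6(q'+q^2)$. The squared linear term yields products $q(\cdot)q(\cdot)$, and after discarding the square, $\tau-F_q'$ is bounded below by $-\frac14\sum_{i,j,k}p_ip_jp_k\,K_{ijk}$. Here $K$ is an average of expressions $q(s)q(t)+\frac{q(s)-q(t)}{s-t}$ with $s,t$ chosen among $\lambda_i$, $\lambda_j$ and their midpoints, all lying in $[0,1]$.

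\emph{Main obstacle.} The hard step is organising this sixth-degree expression so that each coefficient is exactly of the form in \eqref{eqn:certificate}, rather than merely bounded by it. This requires choosing how to complete the square: I would try the vector $U'-\frac12 QU$, where $Q$ is the matrix with entries $Q_{ii'}$ built from $q$ at midpoints weighted by $p_j$. If this works, each coefficient satisfies $K_{ijk}\leq-\kappa$ by \eqref{eqn:certificate}, and since the $p_i$ are nonnegative,
\begin{align*}
\tau-F_q'\geq\frac\kappa4\Bigl(\sum_ip_i\Bigr)^3=\frac\kappa4\rho^3.
\end{align*}
Integrating over $\R$, with the boundary terms vanishing by Step 1, gives \eqref{eqn:polynomial-bound} for polynomial $q$.

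\emph{Step 3 (approximation).} For general $q\in C^1([0,1])$, choose polynomials $q_n\to q$ in $C^1([0,1])$, for instance by integrating Weierstrass approximants of $q'$. Then $q_n(s)q_n(t)\to q(s)q(t)$ uniformly, and the divided differences also converge uniformly, since they equal $\int_0^1 q_n'(t+\theta(s-t))\dd\theta$. Thus for every $\epsilon>0$ the polynomial $q_n$ satisfies \eqref{eqn:certificate} with $\kappa-\epsilon$ once $n$ is large. Applying the polynomial case to $q_n$ and letting $\epsilon\to0$ gives \eqref{eqn:polynomial-bound} for $q$.
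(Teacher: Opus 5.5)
You follow the paper's route: the same $F_q$ written through the moments $M_r=U^{\mathsf T}A^rU$, evaluation in an eigenbasis of $A(x)$ without differentiating eigenvectors, completing the square, and reduction to polynomial $q$ by integrating approximants of $q'$. Steps 1 and 3 are fine.

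The gap is that the step you call the main obstacle is only asserted (``If this works''), and it is the heart of the lemma. The certificate \eqref{eqn:certificate} controls only the sum $q(s)q(t)+q[s,t]$ at a common pair $(s,t)$, where $q[s,t]=(q(s)-q(t))/(s-t)$. The product terms come from the square and the divided differences come from $A'=UU^{\mathsf T}$. A product such as $q(s)^2\geq0$ must be cancelled by the divided difference at the same pair, with the same weight, and your sketch does not show that this matching occurs. There is also no real choice in completing the square. In your notation $w_i=e_i\cdot U$, $v_i=e_i\cdot U'$, $s_{ij}=(\lambda_i+\lambda_j)/2$, $B_i=\sum_jp_jq(s_{ij})$, the cross term $-\sum_iv_i(w_iB_i)$ forces $\sum_i(v_i-\tfrac12w_iB_i)^2$, which is what your $U'-\tfrac12QU$ amounts to.

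The matching does hold, by a short computation you need to supply. Write $q((s+t)/2)=\sum_{r,m}c_{rm}s^rt^m$ with $c_{rm}=c_{mr}$. Symmetry gives $F_q'=\frac12\sum_{r,m}c_{rm}M_r'M_m$, and the $A'$-contribution to $M_r'$ is $\sum_{\ell=0}^{r-1}M_\ell M_{r-1-\ell}$. In the eigenbasis, the sextic part of $F_q'$ is therefore
\begin{align*}
\frac12\sum_{i,j,k}p_ip_jp_k\sum_{r,m}c_{rm}\frac{\lambda_i^r-\lambda_k^r}{\lambda_i-\lambda_k}\lambda_j^m=\frac12\sum_{i,j,k}p_ip_jp_k\frac{q(s_{ij})-q(s_{kj})}{\lambda_i-\lambda_k}=\frac14\sum_{i,j,k}p_ip_jp_k\,q[s_{ij},s_{kj}]\,,
\end{align*}
because $s_{ij}-s_{kj}=(\lambda_i-\lambda_k)/2$, so the midpoint halves the coefficient. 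The remainder left by the square is
\begin{align*}
-\frac14\sum_ip_iB_i^2=-\frac14\sum_{i,j,k}p_ip_jp_k\,q(s_{ij})q(s_{ik})=-\frac14\sum_{i,j,k}p_ip_jp_k\,q(s_{ij})q(s_{kj})\,,
\end{align*}
after interchanging $i$ and $j$. Both pieces carry the same coefficient $1/4$ and the same pair $(s_{ij},s_{kj})\in[0,1]^2$. Hence $K_{ijk}=q(s_{ij})q(s_{kj})+q[s_{ij},s_{kj}]\leq-\kappa$ is a single instance of \eqref{eqn:certificate}, and the termwise bound with $p_ip_jp_k\geq0$ and $\sum_ip_i=\rho$ finishes the proof. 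This is exactly the paper's computation; with it supplied, your argument is complete.
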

	
	\begin{proof}
		We write $q[s,t]=(q(s)-q(t))/(s-t)$, with $q[s,s]=q'(s)$.
		
		It suffices to prove the result for polynomials. Indeed, approximate $q'$ uniformly by real polynomials $r_n$ and set $q_n(s)=q(0)+\int_0^s r_n(t)\dd t$. Then $q_n\to q$ in $C^1([0,1])$, and
		\begin{align*}
			\sup_{s,t\in[0,1]}|q_n[s,t]-q[s,t]|\leq\|q_n'-q'\|_{L^\infty(0,1)}\longrightarrow 0\,.
		\end{align*}
		Thus $q_n(s)q_n(t)+q_n[s,t]\leq-\kappa+\varepsilon_n$ uniformly on $[0,1]^2$, for some $\varepsilon_n\geq0$ tending to zero. Applying the polynomial case with $\kappa-\varepsilon_n$ and passing to the limit gives \eqref{eqn:polynomial-bound}, since both integrals are finite for $U\in H^1(\R;\R^N)$.
		
		Introduce the scalar moments $M_r=U\cdot A^rU$ and $N_r=U'\cdot A^rU$ for integer $r\geq 0$. The ordinary product rule and $A'=UU^{\mathsf T}$ give
		\begin{align}\label{eqn:moment-rule}
			M_r'=2N_r+\sum_{\ell=0}^{r-1}M_\ell M_{r-1-\ell}\,.
		\end{align}
		Here $(A^r)'=\sum_{\ell=0}^{r-1}A^\ell A'A^{r-1-\ell}$, and substituting $A'=UU^{\mathsf T}$ gives $U\cdot A^\ell A'A^{r-1-\ell}U=M_\ell M_{r-1-\ell}$. The two outer derivatives both equal $N_r$ because $A^r$ is symmetric. Expand the symmetric polynomial $q((s+t)/2)$ and introduce $F_q$ by
		\begin{align*}
			q\left(\frac{s+t}{2}\right)=\sum_{r,m}c_{rm}s^rt^m\,,\qquad c_{rm}=c_{mr}\,,\qquad F_q=\frac14\sum_{r,m}c_{rm}M_rM_m\,.
		\end{align*}
		We verify below that this expression agrees with \eqref{eqn:boundary-term} at each point. Since $A$ and $U$ are locally absolutely continuous, so are the moments $M_r$ and $F_q$; all derivative identities below hold almost everywhere.
		
		All sums are finite. Differentiating and using the symmetry of the coefficients yields
		\begin{align}\label{eqn:moment-derivative}
			F_q'=\sum_{r,m}c_{rm}N_rM_m+\frac12\sum_{r,m}c_{rm}\sum_{\ell=0}^{r-1}M_\ell M_{r-1-\ell}M_m\,.
		\end{align}
		
		Indeed, interchanging $r$ and $m$ shows that the two terms from differentiating $M_rM_m$ have equal sums, so $F_q'=\frac12\sum_{r,m}c_{rm}M_r'M_m$. Substituting \eqref{eqn:moment-rule} gives the coefficients $1$ and $1/2$ above.
		
		At the point where we evaluate this identity, choose any orthonormal eigenbasis $e_i$ of $A$, with $Ae_i=\lambda_i e_i$. Set
		\begin{align*}
			v_i=e_i\cdot U\,,\qquad w_i=e_i\cdot U'\,,\qquad p_i=v_i^2\,,\qquad s_{ij}=\frac{\lambda_i+\lambda_j}{2}\,,\qquad B_i=\sum_jp_jq(s_{ij})\,.
		\end{align*}
		Then $M_r=\sum_i p_i\lambda_i^r$, $N_r=\sum_i v_iw_i\lambda_i^r$, $\sum_i p_i=\rho$ and $\sum_iw_i^2=\tau$. Substituting the formula for $M_r$ into the polynomial definition of $F_q$ recovers \eqref{eqn:boundary-term}.
		
		Subtracting the expansions of $q((s+t)/2)$ and $q((z+t)/2)$ and using
		\begin{align*}
			s^r-z^r=(s-z)\sum_{\ell=0}^{r-1}s^\ell z^{r-1-\ell}\,,\qquad r\geq1\,,
		\end{align*}
		gives
		\begin{align*}
			\frac{q((s+t)/2)-q((z+t)/2)}{s-z}=\sum_{r,m}c_{rm}\sum_{\ell=0}^{r-1}s^\ell z^{r-1-\ell}t^m\,.
		\end{align*}
		Substituting this into \eqref{eqn:moment-derivative} and using $s_{ij}-s_{kj}=(\lambda_i-\lambda_k)/2$ gives
		\begin{align}\label{eqn:chain-rule}
			F_q'=\sum_i v_iw_iB_i+\frac14\sum_{i,j,k}p_ip_jp_kq[s_{ij},s_{kj}]\,.
		\end{align}
		In the first term, summing $c_{rm}\lambda_i^r\lambda_j^m$ recovers $q(s_{ij})$. For the second term, the midpoint gives
		\begin{align*}
			\frac{q(s_{ij})-q(s_{kj})}{\lambda_i-\lambda_k}=\frac12q[s_{ij},s_{kj}]\,.
		\end{align*}
		This changes the coefficient $1/2$ in \eqref{eqn:moment-derivative} to $1/4$, matching the coefficient produced by the square below. The quotient is interpreted continuously when $\lambda_i=\lambda_k$. Subtracting \eqref{eqn:chain-rule} from $\tau$ and completing the squares gives
		\begin{align}\label{eqn:tauFrho}
			\tau-F_q'&=\sum_i\left(w_i-\frac12v_iB_i\right)^2-\frac14\sum_{i,j,k}p_ip_jp_k\left(q(s_{ij})q(s_{kj})+q[s_{ij},s_{kj}]\right)\\
			&\geq\frac\kappa4\rho^3\,.\notag
		\end{align}
		Indeed, the cross term is $-\sum_i v_iw_iB_i$. Expanding $\sum_i p_iB_i^2$ and interchanging $i,j$ gives the displayed product of values of $q$, paired with the divided difference from differentiating $A$. Their common coefficient $1/4$ is what allows \eqref{eqn:certificate} to apply. Every $s_{ij}$ belongs to $[0,1]$, and the non-negative weights $p_ip_jp_k$ sum to $\rho^3$.
		
		We now integrate \eqref{eqn:tauFrho} over $(-\Lambda,\Lambda)$ and let $\Lambda\rightarrow \infty$. Since $F_q$ is locally absolutely continuous, we get 
		\begin{align*}
			\int_{-\Lambda}^\Lambda\tau\dd x-F_q(\Lambda)+F_q(-\Lambda)\geq\frac\kappa4\int_{-\Lambda}^\Lambda\rho^3\dd x\,.
		\end{align*}
		To show that $F_q(\Lambda)$ and $F_q(-\Lambda)$ go to zero as $\Lambda\rightarrow \infty$, we use
		\begin{align*}
			F_q=\frac14\sum_{i,j}p_ip_jq(s_{ij})\,,\qquad |F_q|\leq\frac14\|q\|_{L^\infty(0,1)}\rho^2\,.
		\end{align*}
		Here we used $p_i\geq0$ and $\sum_{i,j}p_ip_j=(\sum_i p_i)^2=\rho^2$. Since $U\in H^1(\R;\R^N)$, it is bounded, and $\rho'=2U\cdot U'\in L^1$ gives limits of $\rho$ at both ends, necessarily zero because $\rho\in L^1$. Thus $F_q(\pm\infty)=0$, while $\int\rho^3\leq\|\rho\|_\infty^2\int\rho<\infty$ and $\tau\in L^1$. Letting $\Lambda\rightarrow \infty$ therefore proves \eqref{eqn:polynomial-bound}.
	\end{proof}
	
	We now complete the proof of Theorem \ref{thm:main}.
	\begin{proof}[Proof of Theorem~\ref{thm:main}]
		By duality, it suffices to prove \eqref{eqn:kinetic} for every finite orthonormal family. Fix $0<\alpha<\pi$. We apply Lemma~\ref{lem:polynomial} with $q(s)=b_\alpha(s)=-\alpha \tan(\alpha(s-1/2))$ as above and $\kappa=\alpha^2$. 
		
		We verify that $b_\alpha$ satisfies the stronger assumption \eqref{eqn:certificate}. For $s,t\in [0,1]$, put $X=\alpha(s-1/2)$, $Y=\alpha(t-1/2)$ and $z=X-Y=\alpha(s-t)$. Then
		\begin{align*}
			b_\alpha(s)-b_\alpha(t)=-\frac{\alpha(\sin X\cos Y-\cos X\sin Y)}{\cos X\cos Y}=-\frac{\alpha\sin z}{\cos X\cos Y}\,,\\
			b_\alpha(s)b_\alpha(t)+\alpha^2=\frac{\alpha^2(\sin X\sin Y+\cos X\cos Y)}{\cos X\cos Y}=\frac{\alpha^2\cos z}{\cos X\cos Y}\,.
		\end{align*}
		Dividing the first identity by $s-t=z/\alpha$ and adding the second yields, with $\frac{\sin z}{z}$ understood as $1$ at $z=0$,
		\begin{align}\label{eqn:balphaident}
			b_\alpha(s)b_\alpha(t)+\frac{b_\alpha(s)-b_\alpha(t)}{s-t}+\alpha^2=\frac{\alpha^2}{\cos X\cos Y}\left(\cos z-\frac{\sin z}{z}\right)\leq 0\,.
		\end{align}
		Indeed, $|X|,|Y|\leq\alpha/2<\pi/2$, so the denominator is positive. For $0<z<\pi$, we have $\sin z-z\cos z=\int_0^z t\sin t\dd t\geq0$. The expression $\cos z-\frac{\sin z}{z}$ is even and tends to zero at $z=0$, which proves the sign for every $|z|\leq\alpha$, including $s=t$.
		
		Applying Lemma~\ref{lem:polynomial} with $q=b_\alpha$ and $\kappa=\alpha^2$ gives
		\begin{align*}
			\sum_{j=1}^N\int_{\R}(u_j')^2\dd x\geq\frac{\alpha^2}{4}\int_{\R}\left(\sum_{j=1}^N u_j^2\right)^3\dd x\,.
		\end{align*}
		Letting $\alpha\uparrow\pi$ proves \eqref{eqn:kinetic} and hence Theorem~\ref{thm:main}. 
	\end{proof}
	
	\section{Proof of Theorem \ref{thm:higher-dimensional}}\label{sec:vector-proof}
	
	We now extend the one-dimensional Lieb--Thirring inequality to operator-valued potentials, retaining the sharp constant. As in Section~1, we work with the equivalent kinetic formulation, which now concerns Hilbert-space-valued orthonormal families. We take the inner product to be linear in its second argument. 
	
	\begin{proposition}\label{prop:vector-kinetic}
		Let $\psi_1,\ldots,\psi_N\in H^1(\R;\Hs)$ be orthonormal in $L^2(\R;\Hs)$, where $\Hs$ is a separable complex Hilbert space. Define the non-negative finite-rank density operator
		\begin{align*}
			R(x)f=\sum_{j=1}^N\langle\psi_j(x),f\rangle_{\Hs}\,\psi_j(x)\,,\qquad f\in\Hs\,.
		\end{align*}
		Then
		\begin{align*}
			\sum_{j=1}^N\int_{\R}\|\psi_j'(x)\|_{\Hs}^2\dd x\geq\frac{\pi^2}{4}\int_{\R}\tr_{\Hs}R(x)^3\dd x\,.
		\end{align*}
	\end{proposition}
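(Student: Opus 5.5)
We will repeat the proof of Lemma~\ref{lem:polynomial} with Hilbert-space-valued functions. The natural replacement for $U$ is the linear map $\Psi(x)\colon\Hs\to\C^N$, $(\Psi(x)f)_j=\langle\psi_j(x),f\rangle_{\Hs}$. Then $R(x)=\Psi(x)^*\Psi(x)$ has the same nonzero spectrum, counted with multiplicity, as the $N\times N$ Gram matrix $G(x)=\Psi(x)\Psi(x)^*$, whose entries are $G_{jk}=\langle\psi_k(x),\psi_j(x)\rangle_{\Hs}$. Hence $\tr_{\Hs}R^3=\tr G^3$. The kinetic density is $\tau=\sum_j\|\psi_j'\|^2=\tr(\Psi'\Psi'^*)$. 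In the scalar case $N\times 1$ the role of $U$ is played by a column; here the $\ell$-th "component'' is replaced by an element of $\Hs$.

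The cumulative mass matrix is defined as
\begin{align*}
A(x)=\int_{-\infty}^x\overline{\Psi(y)\Psi(y)^*}\dd y,
\end{align*}
or more precisely the Hermitian $N\times N$ matrix with entries $\int_{-\infty}^x\langle\psi_j,\psi_k\rangle\dd y$. Orthonormality gives $0\le A\le I$, and $A'=G$ up to transposition and conjugation conventions. The correct convention must be fixed so that $A'$ has the form $W W^*$ for $W=\Psi$, in the sense of operators $\Hs\to\C^N$.

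The moments become $M_r=\tr_{\Hs}(\Psi^*A^r\Psi)$ and $N_r=\re\tr_{\Hs}(\Psi'^*A^r\Psi)$. The product rule gives
\begin{align*}
M_r'=2N_r+\sum_\ell\tr(\Psi^*A^\ell\Psi\Psi^*A^{r-1-\ell}\Psi).
\end{align*}
The last term is no longer $M_\ell M_{r-1-\ell}$. It is $\tr(G A^\ell G A^{r-1-\ell})$. This is where the vector case differs, and I expect it to be the main point.

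To handle this, define $F_q$ through the matrix structure rather than through products of scalars. Diagonalise $A=\sum_i\lambda_i e_ie_i^*$ and set $\phi_i=\Psi^*e_i\in\Hs$. The real weights $p_i$ are replaced by the Gram entries $P_{ik}=\langle\phi_i,\phi_k\rangle$, a positive semidefinite matrix. Take
\begin{align*}
F_q=\frac14\sum_{i,k}|P_{ik}|^2q(s_{ik}),
\end{align*}
which is $\frac14\sum c_{rm}\tr(A^rGA^mG)$ and therefore polynomial in $A$ and $\Psi$. This reduces to \eqref{eqn:boundary-term} when $N=1$ or when $\Hs=\C$, since then $P$ has rank one and $|P_{ik}|^2=p_ip_k$.

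The computation of $F_q'$ then goes as before. One term is $\re\sum_{i,k}\langle w_i,\phi_k\rangle B_{ik}$-type cross terms with $w_i=\Psi'^*e_i$. The other is a cubic term $\frac14\sum_{i,j,k}\re(P_{ij}P_{jk}P_{ki})\,q[s_{ij},s_{kj}]$, or a symmetrised version of it. Completing the square in $\Hs\otimes\C^N$ should give
\begin{align*}
\tau-F_q'=\Big\|w-\tfrac12\mathcal B\phi\Big\|^2-\frac14\sum_{i,j,k}P_{ij}P_{jk}P_{ki}\bigl(q(s_{ij})q(s_{jk})+q[s_{ij},s_{kj}]\bigr).
\end{align*}

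The crux is then to show $-\sum P_{ij}P_{jk}P_{ki}K_{ijk}\ge\kappa\tr P^3$ from \eqref{eqn:certificate}. Pointwise positivity of the weights is lost, because the $P_{ij}P_{jk}P_{ki}$ are complex. I would first try to exploit the structure: for $b_\alpha$, identity \eqref{eqn:balphaident} factorises the kernel as $\frac{\alpha^2}{\cos X\cos Y}(\cos z-\frac{\sin z}{z})$. Writing $\cos z-\frac{\sin z}{z}=-\int_0^1(1-\cos(\theta z))\,\dd\theta$-type integral, or as $-\int_0^1\theta\sin(\theta z)\dd\theta\cdot z$, would express it as a positive superposition of terms of the form $|\sum_i\ldots e^{i\theta\lambda_i}\ldots|^2$-like Hilbert–Schmidt norms. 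Each such term is nonnegative by positivity of $P$, and together they give $-\sum PPP\,K\ge\alpha^2\tr P^3$.

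If this fails, the fallback is to prove the proposition for $\Hs=\C^M$, pass to general $\Hs$ by finite-dimensional compression, and seek a diagonalisation reducing $P$ to rank-one pieces.

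Finally, $|F_q|\le\frac14\|q\|_\infty\tr G^2\to0$ at $\pm\infty$. Integrating and letting $\alpha\uparrow\pi$ then finishes the proof.
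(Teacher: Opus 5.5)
\paragraph{Gap: the cubic form $\mathcal G_{b_\alpha,\alpha^2}(P,\lambda)$.}
Your set-up matches the paper's Lemma~\ref{lem:vector-chain}: the Gram matrix, the cumulative mass matrix $A$, the flux $F_q=\frac14\sum_{i,k}|P_{ik}|^2q(s_{ik})$, and the completed square that leaves $\frac14\mathcal G_{q,\kappa}(P,\lambda)$ with signed weights $\re(P_{ij}P_{jk}P_{ki})$. (You skip one point: $b_\alpha$ is not a polynomial, and with signed weights the approximation step needs an extra bound such as $\sum_{i,j,k}|P_{ij}P_{jk}P_{ki}|\le(\tr P)^3\le N^2\tr P^3$.)

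The genuine gap is the step you call the crux: $\mathcal G_{b_\alpha,\alpha^2}(P,\lambda)\ge0$ for every $P\ge0$. This is the whole content of the proposition beyond the rank-one case $\dim\Hs=1$, and you give no argument for it. The mechanism you suggest cannot work. By \eqref{eqn:balphaident}, the kernel to control is $\alpha^2h(\theta_i-\theta_k)/\bigl(\cos(\theta_i+\theta_j)\cos(\theta_k+\theta_j)\bigr)$, with $\theta_i=\frac\alpha2(\lambda_i-\frac12)$ and $h(t)=\frac{\sin t}{t}-\cos t$. A positive superposition of phase kernels $e^{i\xi(\lambda_i-\lambda_k)}$ is positive semidefinite in $(i,k)$ for each fixed $j$. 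So is any superposition of kernels $\overline{f(i,j)}f(k,j)$, which is what squared moduli of $P$-weighted sums produce. The actual kernel instead has zero diagonal, since $h(0)=0$, and strictly positive entries whenever $\lambda_i\ne\lambda_k$. A positive semidefinite matrix with zero diagonal is zero, so no such decomposition exists. Your first integral formula is also wrong: $-\int_0^1(1-\cos(\theta z))\dd\theta=\frac{\sin z}{z}-1$. The fallback of splitting $P$ into rank-one pieces fails too. $\mathcal G$ is cubic in $P$, so the mixed terms $\re(P^{(a)}_{ij}P^{(b)}_{jk}P^{(c)}_{ki})$ appear, and they have no sign and are not controlled by \eqref{eqn:certificate}.

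\paragraph{What the paper supplies.}
\begin{enumerate}[(a)]
\item The substitution $C_{ij}=P_{ij}/\cos(\theta_i+\theta_j)$ absorbs the $j$-dependent denominators. It gives $\mathcal G_{b_\alpha,\alpha^2}(P,\lambda)=\alpha^2\sum_{i,k}\re\bigl((C^2)_{ik}C_{ki}\bigr)g(\theta_i,\theta_k)$ with $g(x,y)=\cos(x+y)h(x-y)$. One checks $C\ge0$ from $C-TCT=DPD\ge0$ and $\|T\|<1$.
\item One needs kernels that vanish on the diagonal yet still make the cubic sum non-negative. These are $(\sigma_i-\sigma_k)^4$, for which the sum equals $2\tr(CK^2)$ with $K_{ik}=(\sigma_i-\sigma_k)^2C_{ik}$.
\item On any finite subset of $(-\pi/4,\pi/4)$, $g$ is a non-negative combination of such fourth powers. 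This is the real work. The paper writes $\sqrt{g(x_i,x_j)}=\frac12\mathcal B(\chi_i-\chi_j,\chi_i-\chi_j)$ via an integral representation with kernel $-\partial_x\partial_y\sqrt g$. It proves $\mathcal B$ positive definite by a weighted Schur test. Hence $\sqrt g$ is a squared Euclidean distance $|v_i-v_j|^2$, and $|v_i-v_j|^4$ is then expanded into scalar fourth powers.
\end{enumerate}
Without an argument of this kind, your proposal reproduces only the scalar case already covered by Lemma~\ref{lem:polynomial}.
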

	
	The proof of Proposition \ref{prop:vector-kinetic} is contained in Subsection \ref{sec:vector-kineticproof}. We first explain how it is used to obtain Theorem \ref{thm:higher-dimensional} using the lifting argument of \cite{laptev2000sharp,dolbeault2008improved}.

	Let $x\mapsto\mathcal V(x)$ be a strongly measurable family of non-negative self-adjoint compact operators, with
	\begin{align*}
		\int_{\R}\tr_{\Hs}\mathcal V(x)^{3/2}\dd x<\infty\,.
	\end{align*}
	Using the operator-valued duality, as in Dolbeault, Laptev, and Loss \cite{dolbeault2008improved}, Proposition~\ref{prop:vector-kinetic} gives
	\begin{align}\label{eqn:operator-spectral-main}
		\Tr_{L^2(\R;\Hs)}\left(-\frac{d^2}{dx^2}\otimes I_{\Hs}-\mathcal V\right)_-\leq\frac4{3\sqrt3\pi}\int_{\R}\tr_{\Hs}\mathcal V(x)^{3/2}\dd x\,,
	\end{align}
	where $I_{\Hs}$ is the identity on $\Hs$, $H_-=\max\{-H,0\}$, and the Schr\"odinger operator is defined by its closed, semibounded quadratic form; see, e.g., \cite{laptevReadSchimmer}.
	
	\begin{proof}[Proof of Theorem~\ref{thm:higher-dimensional}]
		For $d=1$, the assertion is Theorem~\ref{thm:main}. For $d\geq2$, take $\mathcal{H}=L^2(\R^{d-1})$ and $\mathcal{V}(x_1)=(-\Delta_{d-1}-V(x_1,\cdot))_{-}$. Using \eqref{eqn:operator-spectral-main}, we obtain 
		\begin{align*}
			\Tr(-\Delta-V)_-&\leq\Tr_{L^2(\R;\Hs)}\left(-\frac{d^2}{dx_1^2}\otimes I_{\Hs}-\mathcal V\right)_-\\&\leq\frac4{3\sqrt3\pi}\int_{\R}\Tr(-\Delta_{d-1}-V(x_1,\cdot))_-^{3/2}\dd x_1\\
			&\leq\frac{4L_{3/2,d-1}^{\mathrm{cl}}}{3\sqrt3\pi}\int_{\R^d}V(x)_+^{1+d/2}\dd x\,,
		\end{align*}
		where we have used the variational principle in the first step and the sharp $\gamma=3/2$ inequality from \cite{laptev2000sharp} in the last. We conclude that
		\begin{equation*}
			L_{1,d}\leq\frac4{3\sqrt3\pi}L_{3/2,d-1}^{\mathrm{cl}}=\frac2{\sqrt3}L_{1,1}^{\mathrm{cl}}L_{3/2,d-1}^{\mathrm{cl}}=\frac2{\sqrt3}L_{1,d}^{\mathrm{cl}}\,.\qedhere 
		\end{equation*}
	\end{proof}

	\subsection{The vector-valued extension}
	
	Let $\psi_1,\ldots,\psi_N$ be an orthonormal family as in Proposition~\ref{prop:vector-kinetic}. We define its density matrix $Q$ and cumulative mass matrix $A$ by
	\begin{align*}
		Q_{ij}(x)=\langle\psi_i(x),\psi_j(x)\rangle_{\Hs}\,,\qquad A(x)=\int_{-\infty}^xQ(y)\dd y\,.
	\end{align*}
	Both are $N\times N$ matrices. Positivity of $Q$ and orthonormality give
	\begin{align*}
		A'=Q\,,\qquad A(-\infty)=0\,,\qquad A(+\infty)=I\,,\qquad 0\leq A\leq I\,.
	\end{align*}
	Thus the cumulative mass matrix satisfies the same bounds as in the scalar proof. For the real-valued family in Section~1, $Q=UU^{\mathsf T}$; here $Q$ may have rank larger than one.
	
	To extend the argument of Section~1, it is convenient to introduce the operator $W(x):\Hs\to\C^N$ defined by
	\begin{align*}
		(W(x)f)_j=\langle\psi_j(x),f\rangle_{\Hs}\,,\qquad W^*c=\sum_jc_j\psi_j\,.
	\end{align*}
	Then $Q=WW^*$ and $R=W^*W$. Their nonzero eigenvalues agree, and hence
	\begin{align}\label{eqn:density-traces}
		\tr_{\Hs}R^3=\tr_{\C^N}Q^3\,.
	\end{align}
	We denote the kinetic energy density by
	\begin{align*}
		\tau=\sum_j\|\psi_j'\|_{\Hs}^2=\|W'\|_{\mathrm{HS}}^2\,.
	\end{align*}
	Here $\|\cdot\|_{\mathrm{HS}}$ denotes the Hilbert--Schmidt norm. In particular, $W$ belongs to $H^1$ with values in the Hilbert--Schmidt operators from $\Hs$ to $\C^N$. We write $\tr$ for a trace when its space is clear.

	As in the scalar proof, we introduce an auxiliary function $q\in C^1([0,1];\R)$ and a constant $\kappa\in\R$, the conditions on which will be specified below. We seek a real, locally absolutely continuous function $F_q$ that satisfies
	\begin{align}\label{eqn:vector-flux-target}
		\tau-F_q'\geq\frac{\kappa}{4}\tr Q^3\quad\text{a.e. on }\R\,,\qquad \lim_{x\to\pm\infty}F_q(x)=0\,.
	\end{align}
	These properties give the kinetic bound with constant $\kappa/4$ by integration.
	
	We will take
	\begin{align}\label{eqn:vector-boundary}
		F_q=\frac14\sum_{i,j}|Q_{ij}|^2q\left(\frac{\lambda_i+\lambda_j}{2}\right)\,,
	\end{align}
	where, at each $x$, the $\lambda_i$ are the eigenvalues of $A(x)$ and $Q(x)$ is expressed in an orthonormal eigenbasis of $A(x)$. This expression is independent of the chosen eigenbasis. In the real scalar case, $Q=UU^{\mathsf T}$ gives $|Q_{ij}|^2=p_ip_j$, so this recovers \eqref{eqn:boundary-term}.
	
	To state the condition on $q$, we introduce the following quantity. For an $N\times N$  matrix $P$ and $\lambda\in[0,1]^N$, put $s_{ij}=(\lambda_i+\lambda_j)/2$ and define
	\begin{align*}
		\mathcal G_{q,\kappa}(P,\lambda)=\sum_{i,j,k=1}^N\re(P_{ij}P_{jk}P_{ki})\bigl(-\kappa-q(s_{ij})q(s_{kj})-q[s_{ij},s_{kj}]\bigr)\,,
	\end{align*}
	where $q[s,t]=(q(s)-q(t))/(s-t)$, with $q[s,s]=q'(s)$.
	
	\begin{lemma}\label{lem:vector-chain}
		Suppose a function $q\in C^1([0,1];\R)$ and a constant $\kappa>0$ satisfy
		\begin{align}\label{eqn:vector-certificate}
			\mathcal G_{q,\kappa}(P,\lambda)\geq0
		\end{align}
		for every $N\times N$ non-negative self-adjoint matrix $P$, and every $\lambda\in[0,1]^N$. Then
		\begin{align}\label{eqn:vector-generic-bound}
			\sum_{j=1}^N\int_{\R}\|\psi_j'(x)\|_{\Hs}^2\dd x\geq\frac\kappa4\int_{\R}\tr_{\Hs}R(x)^3\dd x\,. 
		\end{align}
	\end{lemma}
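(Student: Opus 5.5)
We follow the scheme of Lemma~\ref{lem:polynomial}, replacing the rank-one structure $Q=UU^{\mathsf T}$ by a general non-negative matrix $Q=WW^*$.

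\emph{Reduction to polynomials.} As before, it suffices to treat polynomial $q$. We approximate $q'$ uniformly by polynomials $r_n$ and set $q_n(s)=q(0)+\int_0^s r_n$. Then $q_n\to q$ in $C^1$, and the divided differences converge uniformly. Since $\sum_{i,j,k}|P_{ij}P_{jk}P_{ki}|\leq C_N(\tr P)^3$, the condition $\mathcal G_{q_n,\kappa-\varepsilon_n}\geq0$ holds for some $\varepsilon_n\to0$. Indeed, the perturbation of the coefficient is $O(\|q_n-q\|_{C^1})$ times $(\tr P)^3$, and we must absorb it into $-\varepsilon_n\sum\re(P_{ij}P_{jk}P_{ki})=-\varepsilon_n\tr P^3$. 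This is the delicate point: $\tr P^3\geq N^{-2}(\tr P)^3$ for $P\geq0$, so absorption works with $\varepsilon_n=C_N'\|q_n-q\|_{C^1}$.

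\emph{Polynomial expression for $F_q$.} For polynomial $q$ we write $q((s+t)/2)=\sum c_{rm}s^rt^m$ and set
\[
F_q=\tfrac14\sum_{r,m}c_{rm}\tr(A^rQA^mQ).
\]
In an eigenbasis of $A$ this equals $\frac14\sum_{i,j}|Q_{ij}|^2q(s_{ij})$, which is \eqref{eqn:vector-boundary}. It is locally absolutely continuous because $A$ and $Q=WW^*$ are. Differentiating uses $A'=Q$ and $Q'=W'W^*+WW'^*$. The $A$-derivatives give $\sum_{\ell}\tr(A^\ell QA^{r-1-\ell}QA^mQ)$-type terms. At a point, in the eigenbasis, these become
\[
\tfrac14\sum_{i,j,k}Q_{ij}Q_{jk}Q_{ki}\,q[s_{ij},s_{kj}],
\]
with real part taken via symmetrization, using the midpoint identity $q(s_{ij})-q(s_{kj})=\frac12(\lambda_i-\lambda_k)q[s_{ij},s_{kj}]$ exactly as in \eqref{eqn:chain-rule}. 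The $Q'$-terms give
\[
2\re\tr\bigl(W'^*B W\bigr)\cdot\tfrac12,
\]
where $B$ is the matrix with entries $B_{ij}=Q_{ij}q(s_{ij})$ in the eigenbasis (Schur product of $Q$ with $q(s_{ij})$). Thus
\[
F_q'=\re\tr(W'^*BW)+\tfrac14\sum_{i,j,k}\re(Q_{ij}Q_{jk}Q_{ki})\,q[s_{ij},s_{kj}].
\]

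\emph{Completing the square.} We write
\[
\tau-F_q'=\bigl\|W'-\tfrac12BW\bigr\|_{\mathrm{HS}}^2-\tfrac14\tr(W^*B^2W)-\tfrac14\sum\re(\cdots)q[\cdot,\cdot].
\]
Now $\tr(W^*B^2W)=\tr(B^2Q)=\sum_{i,j,k}B_{ij}B_{jk}Q_{ki}$, and $B_{ij}B_{jk}=Q_{ij}Q_{jk}q(s_{ij})q(s_{jk})$ with $s_{jk}=s_{kj}$. Hence
\[
\tau-F_q'\geq\tfrac14\Bigl(\mathcal G_{q,\kappa}(Q,\lambda)+\kappa\sum\re(Q_{ij}Q_{jk}Q_{ki})\Bigr)\geq\tfrac\kappa4\tr Q^3,
\]
using \eqref{eqn:vector-certificate} with $P=Q$, and then \eqref{eqn:density-traces}.

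\emph{Boundary terms.} We have $|F_q|\leq\frac14\|q\|_\infty\sum|Q_{ij}|^2=\frac14\|q\|_\infty\tr Q^2\leq\frac14\|q\|_\infty(\tr Q)^2$, and $\tr Q=\sum\|\psi_j\|^2\to0$ at $\pm\infty$ by the same $H^1$ argument as before. Integrating over $(-\Lambda,\Lambda)$ and letting $\Lambda\to\infty$ gives \eqref{eqn:vector-generic-bound}.

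The main obstacle is the bookkeeping in the derivative: verifying that the noncommutative moment expansion collapses to exactly the triple-sum with coefficient $\frac14$ and the real part, with the ordering of $s_{ij},s_{kj}$ matching $\mathcal G$.
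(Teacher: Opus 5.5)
Your proposal is correct and follows the paper's proof essentially step by step. It uses the same $C^1$ polynomial approximation with the error absorbed via $\sum_{i,j,k}|P_{ij}P_{jk}P_{ki}|\le(\tr P)^3\le N^2\tr P^3$. It uses the same moment expression $F_q=\frac14\sum_{r,m}c_{rm}\tr(A^rQA^mQ)$, differentiated in the original coordinates and only then evaluated in an eigenbasis of $A$. It completes the square in the same way, using $\|BW\|_{\mathrm{HS}}^2=\tr(QB^2)$.

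One sign slip: replacing $\kappa$ by $\kappa-\varepsilon_n$ adds $+\varepsilon_n\tr P^3$ to $\mathcal G_{q_n,\kappa}$, so your phrase ``$-\varepsilon_n\tr P^3$'' should read $+\varepsilon_n\tr P^3$. This positive term is what absorbs the approximation error, and your argument uses it correctly.
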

	
	\begin{proof}
		It suffices to prove the result when $q$ is a polynomial. Indeed, approximate $q$ in $C^1([0,1])$ by real polynomials $q_n$, as in Lemma~\ref{lem:polynomial}. Uniform convergence of $q_n$ and its divided differences gives
		\begin{align}\label{eqn:vector-approximation-error}
			|\mathcal G_{q_n,\kappa}(P,\lambda)-\mathcal G_{q,\kappa}(P,\lambda)|\leq\varepsilon_n\sum_{i,j,k}|P_{ij}P_{jk}P_{ki}|
		\end{align}
		for some $\varepsilon_n\geq0$ tending to zero, independent of $P$ and $\lambda$. Since $P\geq0$, we have $|P_{ij}|\leq\sqrt{P_{ii}P_{jj}}$, and hence
		\begin{align}\label{eqn:vector-cubic-bound}
			\sum_{i,j,k}|P_{ij}P_{jk}P_{ki}|\leq(\tr P)^3\leq N^2\tr P^3\,.
		\end{align}
		Put $\kappa_n=\kappa-N^2\varepsilon_n$. Substituting $-\kappa_n=-\kappa+N^2\varepsilon_n$ into the definition of $\mathcal G_{q_n,\kappa_n}$ and using $\sum_{i,j,k}\re(P_{ij}P_{jk}P_{ki})=\tr P^3$, we obtain
		\begin{align*}
			\mathcal G_{q_n,\kappa_n}(P,\lambda)&=\mathcal G_{q_n,\kappa}(P,\lambda)+N^2\varepsilon_n\sum_{i,j,k}\re(P_{ij}P_{jk}P_{ki})\\
			&=\mathcal G_{q_n,\kappa}(P,\lambda)+N^2\varepsilon_n\tr P^3\\
			&\geq\mathcal G_{q,\kappa}(P,\lambda)-\varepsilon_n\sum_{i,j,k}|P_{ij}P_{jk}P_{ki}|+N^2\varepsilon_n\tr P^3\\
			&\geq\mathcal G_{q,\kappa}(P,\lambda)-N^2\varepsilon_n\tr P^3+N^2\varepsilon_n\tr P^3\\
			&=\mathcal G_{q,\kappa}(P,\lambda)\geq0\,.
		\end{align*}
		The first inequality follows from \eqref{eqn:vector-approximation-error}, the second from \eqref{eqn:vector-cubic-bound}, and the last by assumption. Thus $q_n$ satisfies \eqref{eqn:vector-certificate} with $\kappa_n$ for every non-negative $N\times N$ matrix $P$ and every $\lambda\in[0,1]^N$. Since $N$ is fixed, $\kappa_n\to\kappa$ and $\kappa_n>0$ for sufficiently large $n$. Applying the polynomial case gives
		\begin{align*}
			\sum_{j=1}^N\int_{\R}\|\psi_j'\|_{\Hs}^2\dd x\geq\frac{\kappa_n}{4}\int_{\R}\tr_{\Hs}R^3\dd x\,.
		\end{align*}
		Letting $n\to\infty$ gives \eqref{eqn:vector-generic-bound}, since both integrals are finite.

		Thus, we may assume that $q$ is a polynomial. Expand $q((s+t)/2)=\sum_{r,l}c_{rl}s^rt^l$, with $c_{rl}=c_{lr}$, and write
		\begin{align*}
			F_q=\frac14\sum_{r,l}c_{rl}\tr(A^rQA^lQ)\,,\qquad Q'=W'W^*+WW'^*\,.
		\end{align*}
		Since $A$ and $Q$ are locally absolutely continuous, so is $F_q$; all derivative identities below hold almost everywhere. We verify below that this polynomial expression agrees with \eqref{eqn:vector-boundary}.
		
		All sums are finite. Differentiating in the original coordinates, using $A'=Q$ and the symmetry of the coefficients, gives
		\begin{align*}
			F_q'&=\frac12\sum_{r,l}c_{rl}\re\tr(A^rQ'A^lQ)\\
			&\quad+\frac12\sum_{r,l}c_{rl}\sum_{\ell=0}^{r-1}\re\tr(A^\ell QA^{r-1-\ell}QA^lQ)\,.
		\end{align*}
		Indeed, the two differentiated factors $Q$ give equal sums by cyclicity of the trace and symmetry in $r,l$. The two differentiated powers of $A$ likewise give equal sums, and $(A^r)'=\sum_{\ell=0}^{r-1}A^\ell QA^{r-1-\ell}$ gives the second term.
		
		At a point $x$ where this identity holds, choose any orthonormal eigenbasis of $A(x)$, with eigenvalues $\lambda_i$, and express $Q$, $Q'$ and $W$ in this fixed basis. Set
		\begin{align*}
			\qquad B_{ij}=q(s_{ij})Q_{ij}\,,\qquad  s_{ij}=\frac{\lambda_i+\lambda_j}{2}\,.
		\end{align*}
		Note that $B$ is self-adjoint. Its polynomial expression in the original coordinates is $B=\sum_{r,l}c_{rl}A^rQA^l$. Evaluating the polynomial formula for $F_q$ in this basis recovers \eqref{eqn:vector-boundary}; no eigenvectors have been differentiated.
		
		The first term in the derivative is $\tfrac12\tr(BQ')=\re\tr((BW)^*W')$, since $Q'=W'W^*+WW'^*$. For the second term, evaluating the traces and factoring monomial differences gives
		\begin{align*}
			\frac12\sum_{i,j,k}\re(Q_{ij}Q_{jk}Q_{ki})\sum_{r,l}c_{rl}\sum_{\ell=0}^{r-1}\lambda_i^\ell\lambda_k^{r-1-\ell}\lambda_j^l=\frac14\sum_{i,j,k}\re(Q_{ij}Q_{jk}Q_{ki})q[s_{ij},s_{kj}]\,.
		\end{align*}
		The factor $1/2$ from the midpoint is the same as in \eqref{eqn:chain-rule}. Thus
		\begin{align}\label{eqn:vector-chain}
			F_q'=\re\tr\bigl((BW)^*W'\bigr)+\frac14\sum_{i,j,k}\re(Q_{ij}Q_{jk}Q_{ki})q[s_{ij},s_{kj}]\,.
		\end{align}
		
		Recalling $\tau=\|W'\|_{HS}^2$ and using  $\|BW\|_{\mathrm{HS}}^2=\tr(QB^2)$ and $\sum_{i,j,k}Q_{ij}Q_{jk}Q_{ki}=\tr Q^3$, completing the square in \eqref{eqn:vector-chain} gives
		\begin{align}\label{eqn:vector-square}
			\tau-F_q'-\frac\kappa4\tr Q^3&=\bigg\|W'-\frac12BW\bigg\|_{\mathrm{HS}}^2\\
			&\quad+\frac14\sum_{i,j,k}\re(Q_{ij}Q_{jk}Q_{ki})\bigl(-\kappa-q(s_{ij})q(s_{kj})-q[s_{ij},s_{kj}]\bigr)\notag\\
			&\geq\frac14\mathcal G_{q,\kappa}(Q,\lambda)\geq0\,.\notag
		\end{align}
		The last inequality follows from \eqref{eqn:vector-certificate}, applied with $P=Q(x)$ expressed in the chosen eigenbasis and with $\lambda$ the eigenvalues of $A(x)$. Indeed, $Q(x)\geq0$ and $0\leq A(x)\leq I$.

		We now integrate \eqref{eqn:vector-square} over $(-\Lambda,\Lambda)$ and let $\Lambda\rightarrow \infty$. Since $F_q$ is locally absolutely continuous,
		\begin{align*}
			\int_{-\Lambda}^\Lambda\tau\dd x-F_q(\Lambda)+F_q(-\Lambda)\geq\frac\kappa4\int_{-\Lambda}^\Lambda\tr Q^3\dd x\,.
		\end{align*}
		To show that the boundary terms tend to zero, put $\rho=\|W\|_{\mathrm{HS}}^2=\tr Q$. As in Section~1, $\rho\in L^1$ and $\rho'\in L^1$ imply $\rho(x)\to0$ at both infinities. Moreover,
		\begin{align*}
			|F_q(x)|\leq\frac14\|q\|_\infty\tr\bigl(Q(x)^2\bigr)\leq\frac14\|q\|_\infty\rho(x)^2\,,\qquad \tr Q^3\leq\rho^3\in L^1\,.
		\end{align*}
		Thus $F_q(\pm \Lambda)\to0$ as $\Lambda\to\infty$. Letting $\Lambda\to\infty$ and using \eqref{eqn:density-traces} gives \eqref{eqn:vector-generic-bound}.
	\end{proof}
	
	\subsection{Proof of Proposition~\texorpdfstring{\ref{prop:vector-kinetic}}{4}}\label{sec:vector-kineticproof}
	
	We apply Lemma~\ref{lem:vector-chain} with $q=b_\alpha$ and $\kappa=\alpha^2$, where $0<\alpha<\pi$. We verify that $b_\alpha$ satisfies \eqref{eqn:vector-certificate}. Let $P$ be any $N\times N$ non-negative self-adjoint matrix and $\lambda\in[0,1]^N$. We aim to prove
	\begin{align}\label{eqn:alpha-condition}
		\mathcal G_{b_\alpha,\alpha^2}(P,\lambda)=-\sum_{i,j,k}\re(P_{ij}P_{jk}P_{ki})\bigl[\alpha^2+b_\alpha(s_{ij})b_\alpha(s_{kj})+b_\alpha[s_{ij},s_{kj}]\bigr]\geq 0\,.
	\end{align}

	First we introduce the following
	\begin{align*}
		\theta_i=\frac\alpha2\left(\lambda_i-\frac12\right),\qquad C_{ij}=\frac{P_{ij}}{\cos(\theta_i+\theta_j)}\,,
	\end{align*}
	and
	\begin{align*}
		h(t)=\frac{\sin t}{t}-\cos t,\qquad g(x,y)=\cos(x+y)h(x-y)\,.
	\end{align*}
	Using the identity \eqref{eqn:balphaident}, namely
	\begin{align*}
		\alpha^2+b_\alpha(s_{ij})b_\alpha(s_{kj})+b_\alpha[s_{ij},s_{kj}]=-\frac{\alpha^2h(\theta_i-\theta_k)}{\cos(\theta_i+\theta_j)\cos(\theta_k+\theta_j)}\,,
	\end{align*}
	we have 
	\begin{align}\label{eqn:vector-remainder-sign}
		\mathcal G_{b_\alpha,\alpha^2}(P,\lambda)=\alpha^2\sum_{i,k}\re\bigl((C^2)_{ik}C_{ki}\bigr)g(\theta_i,\theta_k)\,.
	\end{align}
	
	The matrix $C$ remains non-negative. Indeed, set $D=\operatorname{diag}((\cos\theta_i)^{-1})$ and $T=\operatorname{diag}(\tan\theta_i)$. The cosine addition formula gives
	\begin{align*}
		C-TCT=DPD\geq0\,,
	\end{align*}
	Let $\mu$ be the smallest eigenvalue of $C$, with corresponding unit eigenvector $v$. Since $C\geq\mu I$, we have
	\begin{align*}
		\mu=\langle v,Cv\rangle\geq\langle Tv,CTv\rangle\geq\mu\|Tv\|^2\,.
	\end{align*}
	As $\|T\|< 1$, this implies that $\mu\geq0$.

	To prove that the sum in \eqref{eqn:vector-remainder-sign} is non-negative, we first consider the special case in which $g(\theta_i,\theta_k)$ is replaced by $(\sigma_i-\sigma_k)^4$, for real numbers $\sigma_1,\ldots,\sigma_N$. Set $K_{ik}=(\sigma_i-\sigma_k)^2C_{ik}$. Then
	\begin{align}\label{eqn:op-quartic}
		\sum_{i,k}\re\bigl((C^2)_{ik}C_{ki}\bigr)(\sigma_i-\sigma_k)^4=2\tr(CK^2)\geq0\,.
	\end{align}

	By linearity, it therefore suffices to express $g(\theta_i,\theta_k)$ as a positive combination of fourth powers of scalar differences. The following lemma gives this representation.
	
	\begin{lemma}\label{lem:op-fourth-powers}
		Let $x_1,\ldots,x_N\in(-\pi/4,\pi/4)$. Then there exists an integer $L\geq 1$ and real numbers $\sigma_{\ell i}$ and $a_\ell\geq 0$ such that
		\begin{align*}
			g(x_i,x_j)=\sum_{\ell=1}^L a_\ell(\sigma_{\ell i}-\sigma_{\ell j})^4,\qquad 1\leq i,j\leq N\,.
		\end{align*}
	\end{lemma}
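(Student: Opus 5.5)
The plan is to build the representation in three layers. First, write $g$ as a positive mixture of kernels of the form $\|\Phi(x)-\Phi(y)\|^4$, where $\Phi$ takes values in a finite-dimensional Euclidean space. Second, reduce each such kernel to scalar fourth powers by spherical averaging. Third, pass from a continuous mixture to a finite one using finiteness of $N$. The second and third layers are routine. For $z\in\R^m$ one has $\int_{S^{m-1}}(\omega\cdot z)^4\dd\omega=c_m|z|^4$ with $c_m>0$, and a cubature formula of degree $4$ on the sphere replaces the integral by a finite positive sum. So $\|\Phi(x_i)-\Phi(x_j)\|^4=\sum_\ell a_\ell(\sigma_{\ell i}-\sigma_{\ell j})^4$ with $\sigma_{\ell i}=\omega_\ell\cdot\Phi(x_i)$. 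Since only the finitely many values $g(x_i,x_j)$ matter, a continuous mixture over a parameter lies in the closed convex cone generated by the finitely many vectors $\bigl((\sigma_i-\sigma_j)^4\bigr)_{i,j}\in\R^{N\times N}$. Carath\'eodory's theorem then gives finite $L$, after checking that this cone is closed, for instance by a compactness argument on normalised $\sigma$.

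For the first layer I would start from the integral formula $\frac{\sin t}{t}=\int_0^1\cos(vt)\dd v$. It gives $h(t)=\int_0^1(\cos(vt)-\cos t)\dd v$. Applying product-to-sum with $a=1+v$ and $b=1-v$ yields
\begin{align*}
g(x,y)=-\frac14\int_0^1\Bigl(c_v(x,x)+c_v(y,y)-2c_v(x,y)\Bigr)\dd v\,,\qquad c_v(x,y)=\cos(ax+by)+\cos(bx+ay)\,.
\end{align*}
Hence $g$ is $-\tfrac14$ times an averaged ``squared distance'' built from the symmetric kernel $c_v$. The diagonal vanishing $g(x,x)=0$ is automatic, which is consistent with a fourth-power representation.

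The task is then to show that this averaged quantity is minus a positive combination of fourth powers of distances. Two tools are available. First, for a unit-modulus embedding $z(x)=e^{i\phi(x)}$ we have $|z(x)-z(y)|^4=6-8\cos(\phi(x)-\phi(y))+2\cos(2(\phi(x)-\phi(y)))$, so suitable trigonometric kernels are admissible. Second, the restriction $|x|<\pi/4$ gives $\cos(x+y)>0$ and keeps all arguments $ax+by$ inside $(-\pi/2,\pi/2)$. I would try Taylor-expanding in $t=x-y$ at fixed $u=x+y$, and matching the coefficients against embeddings $\Phi_v(x)=r(x)e^{i\phi(x)}$ with position-dependent radius, chosen so that $\cos(x+y)$ appears as the radial factor $r(x)r(y)$.

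The main obstacle is exactly this first layer: finding an explicit embedding $\Phi$, or a positive measure on embeddings, whose fourth-power kernel reproduces $\cos(x+y)h(x-y)$ exactly on $(-\pi/4,\pi/4)^2$. The difficulty is already visible near the diagonal. There $g\sim\cos(2x)\,t^2/3$ is only quadratic in $t$, while any single smooth fourth-power kernel is quartic in $t$. So the representation must exploit the discreteness of the points rather than a pointwise identity of smooth kernels. If the continuous route fails, I would fall back on the finite-dimensional cone. By Farkas' lemma, it suffices to show that every symmetric matrix $M$ with $\sum_{i,j}M_{ij}(\sigma_i-\sigma_j)^4\geq0$ for all $\sigma\in\R^N$ also satisfies $\sum_{i,j}M_{ij}g(x_i,x_j)\geq0$. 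I would then try to verify this dual inequality using the integral formula for $h$ above.
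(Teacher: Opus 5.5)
\textbf{There is a genuine gap: the first layer, which carries the whole content of the lemma, is not supplied.}

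Your second and third layers are sound. A degree-$4$ cubature on the sphere, or the explicit identity $|z|^4=\tfrac23\sum_\ell z_\ell^4+\tfrac1{3\cdot2^m}\sum_{\varepsilon\in\{-1,1\}^m}(\varepsilon\cdot z)^4$ used in the paper, turns $|v_i-v_j|^4$ into scalar fourth powers. The cone generated by the vectors $\bigl((\sigma_i-\sigma_j)^4\bigr)_{i,j}$ is closed: after normalising $\sum_i\sigma_i=0$, $|\sigma|=1$, the generators form a compact set of entrywise non-negative, nonzero vectors.

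The problem is the first layer, which you leave open, and nothing you list fills it.
\begin{itemize}
\item The product-to-sum formula only writes $-4g$ as an average of $c_v(x,x)+c_v(y,y)-2c_v(x,y)$ for the indefinite kernels $c_v$. This carries no positivity information about fourth powers.
\item A single smooth embedding such as your $\Phi_v$ is ruled out by your own near-diagonal count. You give no mechanism by which a mixture of them would reproduce $g$.
\item The Farkas dual inequality is just the lemma restated, and you offer no argument for it.
\end{itemize}

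Your near-diagonal observation is the right clue, but it points to a non-smooth embedding rather than to discreteness. It says that $r=\sqrt g\sim\sqrt{\cos(2x)/3}\,|x-y|$ behaves like $|x-y|$. That is a squared Hilbert distance: $|x-y|=\|\mathbf 1_{(x_*,x]}-\mathbf 1_{(x_*,y]}\|_{L^2}^2$ for a fixed base point $x_*$.

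The paper proves that $r$ itself is a squared Euclidean distance on the points, $r(x_i,x_j)=|v_i-v_j|^2$. Then $g(x_i,x_j)=|v_i-v_j|^4$, and your second layer finishes. Concretely:
\begin{itemize}
\item Put $\mathcal D(x)=2\sqrt{\cos 2x}/\sqrt3$, $\mathcal K=-\partial_x\partial_y r$ and $\chi_i=\mathbf 1_{(x_*,x_i]}$.
\item Then $r(x_i,x_j)=\tfrac12\mathcal B(\chi_i-\chi_j,\chi_i-\chi_j)$, where $\mathcal B(f,f)=\int\mathcal D f^2+\iint\mathcal K(s,t)f(s)f(t)\dd s\dd t$.
\item The real work is proving that $\mathcal B$ is positive definite. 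Bounds on $h'/h$ give $0\le\mathcal K(x,y)\le\frac{\pi}{8\sqrt3}|\sin(x-y)|\cos^{-3/2}(x+y)$.
\item A Schur test with weight $\cos(2x)^{-3/4}$ then yields $\mathcal B(f,f)\ge(1-\pi/4)\int\mathcal D f^2$.
\end{itemize}
Finiteness of $N$ is used only to pass to the finite-dimensional span of the $\chi_i$.

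Your proposal contains neither this square-root reduction nor any other argument that $\sqrt g$ is a squared Hilbert-space distance on $(-\pi/4,\pi/4)$. As it stands, it does not prove the lemma.
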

	
	\begin{proof} 
		Set $r(x,y)=\sqrt{g(x,y)}$. The overall strategy is to write $r(x_i,x_j)$ as the squared distance of two vectors $v_i,v_j$ in an auxiliary vector space. Since $g=r^2$ this will allow us to express it in the claimed form.
		
		To construct such vectors, we aim to express $r(x_i,x_j)$ in terms of a positive bilinear form. To do so, we first derive an integral representation for $r$. Since $h(t)=t^2/3+O(t^4)$, we have
		\begin{align*}
			\lim_{y\uparrow x}\partial_xr(x,y)=\frac{\sqrt{\cos(2x)}}{\sqrt3}=\frac{\mathcal{D}(x)}2\,,\qquad \mathcal{D}(x)=2\sqrt{\cos(2x)}/\sqrt3\,.
		\end{align*}
		Define $\mathcal{K}(x,y)=-\partial_x\partial_y r(x,y)$ for $x\ne y$, and set $\mathcal{K}(x,x)=0$. For $y<x$, integration in the second variable gives
		\begin{align*}
			\partial_xr(x,y)=\frac{\mathcal{D}(x)}2+\int_y^x \mathcal{K}(x,t)\dd t\,.
		\end{align*}
		Integrating in the first variable and using $r(y,y)=0$, we obtain
		\begin{align*}
			r(x,y)=\frac12\int_y^x\mathcal{D}(s)\dd s+\int_y^x\int_y^s \mathcal{K}(s,t)\dd t\dd s\,.
		\end{align*}
		Using the symmetry of $\mathcal{K}$, we arrive at 
		\begin{align}\label{eqn:interval-representation}
			r(x,y)=\frac12\int_y^x\mathcal{D}(s)\dd s+\frac12\int_y^x\int_y^x \mathcal{K}(s,t)\dd t\dd s,\qquad y<x\,.
		\end{align}
		
		For real compactly supported step functions, define the symmetric bilinear form
		\begin{align*}
			\mathcal B(f_1,f_2)=\int_{-\pi/4}^{\pi/4} \mathcal{D}(s)f_1(s)f_2(s)\dd s+\iint_{(-\pi/4,\pi/4)^2} \mathcal{K}(s,t)f_1(s)f_2(t)\dd s\dd t\,.
		\end{align*}
		Take $x_\ast=\min_i x_i$ and put $\chi_i=\mathbf1_{(x_\ast,x_i]}$. Equation \eqref{eqn:interval-representation} gives
		\begin{align*}
			r(x_i,x_j)=\frac12\mathcal B(\chi_i-\chi_j,\chi_i-\chi_j)\,.
		\end{align*}

		We assume for now that $\mathcal B$ is positive definite and show how this gives the claimed representation. Consider the space $E=\operatorname{span}\{\chi_1,\ldots,\chi_N\}$, with functions identified up to equality almost everywhere. Our assumption makes $\mathcal B$ an inner product on $E$. If $E=\{0\}$, the assertion is immediate. Otherwise, put $m=\dim E$.
		
		Choose a basis $e_1,\ldots,e_m$ of $E$ which is orthonormal with respect to $\mathcal B$, and define the coordinate vectors $v_i$ by
		\begin{align*}
			\frac{\chi_i}{\sqrt2}=\sum_{\ell=1}^m v_{i\ell}e_\ell\,,\qquad v_i=(v_{i1},\ldots,v_{im})\in\R^m\,.
		\end{align*}
		By orthonormality,
		\begin{align*}
			|v_i-v_j|^2=\frac12\mathcal B(\chi_i-\chi_j,\chi_i-\chi_j)=r(x_i,x_j)\,,
		\end{align*}
		and hence $|v_i-v_j|^4=g(x_i,x_j)$.
		
		For $z\in \R^m$, it remains to express $|z|^4$ using scalar fourth powers. Averaging over all sign choices cancels the odd powers and gives
		\begin{align*}
			\frac1{2^m}\sum_{\epsilon\in\{-1,1\}^m}(\varepsilon\cdot z)^4=\sum_{\ell=1}^m z_\ell^4+6\sum_{\ell<k}z_\ell^2z_k^2\,.
		\end{align*}
		In $|z|^4=(\sum_\ell z_\ell^2)^2$, the mixed terms have coefficient $2$. Dividing the last identity by $3$ and adding $\tfrac23\sum_\ell z_\ell^4$ therefore gives
		\begin{align*}
			|z|^4=\frac23\sum_{\ell=1}^m z_\ell^4+\frac1{3\cdot2^m}\sum_{\epsilon\in\{-1,1\}^m}(\varepsilon\cdot z)^4\,.
		\end{align*}
		Applying this identity to $z=v_i-v_j$ expresses $g(x_i,x_j)$ as a positive combination of fourth powers of the differences $v_{i\ell}-v_{j\ell}$ and $\sum_{\ell=1}^m\varepsilon_\ell(v_{i\ell}-v_{j\ell})$.

		Finally, we establish that $\mathcal{B}$ is positive definite. Integration by parts gives, for $0<t\leq\pi/2$,
		\begin{align*}
			h(t)=\frac{t^2}{2}\int_0^1(1-u^2)\cos(tu)\,du\leq\frac{t^2}{3},\\
			0\leq\frac2t-\frac{h'(t)}{h(t)}=\frac t4\frac{\int_0^1(1-u^2)^2\cos(tu)\,du}{\int_0^1(1-u^2)\cos(tu)\,du}\leq\frac t4.
		\end{align*}
		The bounds follow from $0\leq1-u^2\leq1$ and $\cos(tu)\geq0$. Hence $0<h'/h\leq2/t$ and $0\leq4/t^2-(h'/h)^2\leq1$. Using $h''=(2/t^2-1)h$ and putting $t=|x-y|>0$, we obtain
		\begin{align*}
			\mathcal{K}(x,y)=\frac{r(x,y)}4\left[\tan^2(x+y)+\frac4{t^2}-\left(\frac{h'(t)}{h(t)}\right)^2\right]\,,\\
			0\leq \mathcal{K}(x,y)\leq\frac{t}{4\sqrt3\cos^{3/2}(x+y)}\leq\frac{\pi}{8\sqrt3}\frac{|\sin(x-y)|}{\cos^{3/2}(x+y)}\,.
		\end{align*}
		Here we used $t\leq(\pi/2)\sin t$.
		
		Fix $x\in(-\pi/4,\pi/4)$ and put
		\begin{align*}
			w(x)=\cos(2x)^{-3/4}\,,\qquad J_x(y)=\frac{\cos^{1/4}(2y)}{\sqrt{\cos(x+y)}}\,.
		\end{align*}
		By the product rule and the sine difference formula,
		\begin{align*}
			J_x'(y)&=-\frac{\sin(2y)}{2\cos^{3/4}(2y)\sqrt{\cos(x+y)}}+\frac{\cos^{1/4}(2y)\sin(x+y)}{2\cos^{3/2}(x+y)}\\
			&=\frac{\sin(x-y)}{2\cos^{3/4}(2y)\cos^{3/2}(x+y)}\,.
		\end{align*}
		The denominator is positive and $x-y\in(-\pi/2,\pi/2)$, so $J_x'$ is positive for $y<x$ and negative for $y>x$. Moreover, $J_x(y)\to0$ at both endpoints of $(-\pi/4,\pi/4)$ and $J_x(x)=\cos^{-1/4}(2x)$. Integrating separately on the two intervals therefore gives
		\begin{align*}
			\int_{-\pi/4}^{\pi/4}|J_x'(y)|\dd y=\int_{-\pi/4}^xJ_x'(y)\dd y-\int_x^{\pi/4}J_x'(y)\dd y=2\cos^{-1/4}(2x)\,.
		\end{align*}
		The integrals are understood as limits over interior intervals. The preceding bound on $\mathcal{K}$ gives $\mathcal{K}(x,y)w(y)\leq\pi|J_x'(y)|/(4\sqrt3)$, and hence
		\begin{align}\label{eqn:weighted-kernel-bound}
			\int_{-\pi/4}^{\pi/4} \mathcal{K}(x,y)w(y)\dd y\leq\frac{\pi}{4\sqrt3}\int_{-\pi/4}^{\pi/4}|J_x'(y)|\dd y=\frac{\pi}{2\sqrt3}\cos^{-1/4}(2x)=\frac\pi4 \mathcal{D}(x)w(x)\,.
		\end{align}
		
		We now bound the quadratic term involving $\mathcal{K}$. The inequality $2ab\leq a^2+b^2$, applied with $a=|f(x)|\sqrt{w(y)/w(x)}$ and $b=|f(y)|\sqrt{w(x)/w(y)}$, gives
		\begin{align*}
			2|f(x)f(y)|\leq f(x)^2\frac{w(y)}{w(x)}+f(y)^2\frac{w(x)}{w(y)}\,.
		\end{align*}
		Multiply by $\mathcal{K}(x,y)\geq0$ and integrate. The two terms on the right have equal integrals by symmetry of $\mathcal{K}$, so \eqref{eqn:weighted-kernel-bound} yields
		\begin{align*}
			\bigg|\iint_{(-\pi/4,\pi/4)^2}\mathcal{K}(x,y)f(x)f(y)\dd x\dd y\bigg|&\leq\int_{-\pi/4}^{\pi/4}\frac{f(x)^2}{w(x)}\left(\int_{-\pi/4}^{\pi/4} \mathcal{K}(x,y)w(y)\dd y\right)\dd x\\
			&\leq\frac\pi4\int_{-\pi/4}^{\pi/4} \mathcal{D}(x)f(x)^2\dd x\,.
		\end{align*}
		Consequently $\mathcal B(f,f)\geq(1-\pi/4)\int_{-\pi/4}^{\pi/4} \mathcal{D}f^2$. Since $\mathcal{D}>0$ on $(-\pi/4,\pi/4)$, $\mathcal B$ is positive definite modulo equality almost everywhere. 
	\end{proof}
	
	\begin{proof}[Proof of Proposition~\ref{prop:vector-kinetic}]
		Fix $0<\alpha<\pi$. Applying Lemma~\ref{lem:op-fourth-powers} with $x_i=\theta_i$, positivity follows term by term from \eqref{eqn:op-quartic}:
		\begin{align}\label{eqn:op-G-positive}
			\mathcal G_{b_\alpha,\alpha^2}(P,\lambda)=\alpha^2\sum_{i,k}\re\bigl((C^2)_{ik}C_{ki}\bigr)g(\theta_i,\theta_k)\geq 0\,.
		\end{align}
		
		Equations~\eqref{eqn:vector-remainder-sign} and \eqref{eqn:op-G-positive} give $\mathcal G_{b_\alpha,\alpha^2}(P,\lambda)\geq0$ for every finite non-negative self-adjoint matrix $P$ and every $\lambda\in[0,1]^N$, where $N$ is the size of $P$. This verifies \eqref{eqn:vector-certificate} for $q=b_\alpha$ and $\kappa=\alpha^2$. Lemma~\ref{lem:vector-chain} gives the kinetic bound with coefficient $\alpha^2/4$. Letting $\alpha\uparrow\pi$ gives the stated inequality.
	\end{proof}
	
	\subsection*{Acknowledgments}
	The authors would like to thank Rupert L. Frank, Jan Philip Solovej, Dirk Hundertmark and Charlotte Dietze for helpful discussions and advice. 
	
	\subsection*{Use of AI}
	
	A first version of the main argument was developed by GPT-6 Astra under the guidance of L.R., following collaboration over a couple of months with successive models, beginning with GPT-5.5. Extensions and improvements were then jointly obtained by M.R.S. and L.R. using Fable 5.1 and GPT-6, respectively. The argument was further simplified with assistance from the models, then restructured and verified by the authors. The authors take full responsibility for the final version of the proof.

	\subsection*{Funding}
	Support was provided by the European Research Council (ERC) under the European Union’s Horizon 2020 research and innovation programme (grant agreement No. 101097172 – GEOEDP, L.R.), the ERC Advanced Grant MathBEC (grant agreement No. 101095820, M.R.S.), and the VILLUM Foundation (grant No. VIL73411, M.R.S.).

\end{document}